\theoremstyle{plain}
    \newtheorem{thm}{Theorem}[section]
    \newtheorem{claim}[thm]{Claim}
    \newtheorem{corollary}[thm]{Corollary}
    \newtheorem{lemma}[thm]{Lemma}
    \newtheorem{theorem}[thm]{Theorem}
\theoremstyle{definition}
    \newtheorem{definition}[thm]{Definition}
    \newtheorem{remark}[thm]{Remark}
\theoremstyle{remark}
    \newtheorem{setup}[thm]{}
\newcommand{\C}{\mathbb{C}}
\newcommand{\PP}{\mathbb{P}}
\newcommand{\Q}{\mathbb{Q}}
\newcommand{\R}{\mathbb{R}}
\newcommand{\Z}{\mathbb{Z}}
\newcommand{\OO}{\mathcal{O}}
\newcommand{\alb}{\operatorname{alb}}
\newcommand{\Aut}{\operatorname{Aut}}
\newcommand{\Exc}{\operatorname{Exc}}
\newcommand{\Gal}{\operatorname{Gal}}
\newcommand{\id}{\operatorname{id}}
\newcommand{\NE}{\overline{\operatorname{NE}}}
\newcommand{\Nef}{\operatorname{Nef}}
\newcommand{\NS}{\operatorname{NS}}
\newcommand{\Proj}{\operatorname{Proj}}
\newcommand{\rank}{\operatorname{rank}}
\newcommand{\Sing}{\operatorname{Sing}}
\newcommand{\SL}{\operatorname{SL}}
\newcommand{\Spec}{\operatorname{Spec}}
\newcommand{\Supp}{\operatorname{Supp}}
\newcommand{\Alb}{\operatorname{Alb}}
\begin{document}

\title[automorphism group]{
Algebraic varieties with automorphism groups of maximal rank}

\author{De-Qi Zhang}
\address
{
\textsc{Department of Mathematics} \endgraf
\textsc{National University of Singapore,
10 Lower Kent Ridge Road,
Singapore 119076}
}
\email{matzdq@nus.edu.sg}

\begin{abstract}
We confirm, to some extent, the belief that a projective
variety $X$ has the largest number (relative to the dimension of $X$) of
independent commuting automorphisms of positive
entropy only when $X$ is birational to a complex torus or a quotient of a torus.
We also include an addendum to an early paper \cite{CY3}
though it is not used in the present paper.
$$\text{\it Dedicated to the memory of Eckart Viehweg.}$$
\end{abstract}

\subjclass[2000]{32H50, 
14J50, 
32M05, 
37B40 
}
\keywords{automorphism, iteration, complex dynamics, topological entropy}

\thanks{The author is supported by an ARF of NUS}

\maketitle

\section{Introduction}

We work over the field $\C$ of complex numbers.
We consider an automorphism group $G \le \Aut(X)$
of positive entropy on a compact complex K\"ahler manifold or normal projective variety
$X$. Our belief is: $X$ has the largest number
(relative to the dimension of $X$) of
independent commuting automorphisms of positive
entropy only when $X$ is a complex torus or a quotient of a torus.
We confirm this, to some extent, in Theorems \ref{ThA} and \ref{ThC}.
Our approach is conceptual and classification free.
See \cite{CZ} and \cite{CY3} for the case of threefolds or minimal varieties.

For an automorphism $g \in \Aut(X)$, its (topological) {\it entropy} $h(g) = \log \rho(g)$ is defined
as the logarithm of the {\it spectral radius} $\rho(g)$ of its action on the cohomology:
$$\rho(g) := \max \{|\lambda| \, ; \, \lambda \,\,\, \text{is an eigenvalue of} \,\,\,
{g^*}_{| \oplus_{i \ge 0} \, H^i(X, \C)}\}.$$
By the fundamental work of Gromov and Yomdin, the above definition is equivalent to the original definition
for automorphisms on compact K\"ahler manifolds or $\Q$-factorial projective varieties
(cf. \cite{Gr}, and also \cite[\S 2.2]{DS} and the references therein).

By the surface classification, a (smooth) compact complex surface $S$ has some $g \in \Aut(S)$ of positive entropy only if
$S$ is either the projective plane blown up in at least $10$ points,
or obtained by blowing up
some $g$-periodic orbits on a complex torus, $K3$ surface or Enriques surface (cf. \cite{Ca} for more details).
See \cite{JDG} for a similar phenomenon in higher dimension.

In their very inspiring paper \cite[Theorem 1]{DS}, Dinh and Sibony have proved the following
(cf. \cite{Tits} for its generalization to solvable groups).

\begin{theorem}\label{DSThm} {\rm (cf. \cite[Theorem 1.1]{DS})}
Let $X$ be a compact complex K\"ahler manifold of dimension $n \ge 2$ and $G \le \Aut(X)$ a commutative subgroup.
Then $G = N(G) \times G_1$ where $N(G)$ consists of all elements in $G$ of null entropy and is a subgroup of $G$,
and $G_1$ is a free abelian group of {\rm rank} $r = r(G_1) \le n-1$ and with $g_1$ of positive
entropy for all $g_1 \in G_1 \setminus \{\id\}$. Further, if $r = n-1$ then $N(G)$ is finite.
\end{theorem}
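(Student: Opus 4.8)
The plan is to reduce the statement to linear algebra on $H^*(X,\C)$, using that the K\"ahler (equivalently nef) cone is a salient, full-dimensional, $G$-invariant convex cone in $H^{1,1}(X,\R)$. Every $g\in G$ acts on $\bigoplus_iH^i(X,\C)$ preserving the Hodge decomposition and acting trivially on $H^{2n}$; Poincar\'e duality then identifies $g^*|_{H^{2n-i}}$ with the inverse transpose of $g^*|_{H^i}$, so the eigenvalue multiset of $g^*$ on $\bigoplus_iH^i$ is invariant under $\lambda\mapsto\lambda^{-1}$. Hence $\rho(g)\ge1$, $h(g)=h(g^{-1})$, and (with Gromov--Yomdin) $h(g)=0$ exactly when every eigenvalue of $g^*$ has modulus $1$; and standard properties of the dynamical degrees (log-concavity, $\rho(g)=\max_pd_p(g)$) show moreover that $h(g)=0$ iff $d_1(g):=\rho(g^*|_{H^{1,1}})=1$, so positive entropy is already detected on $V:=H^{1,1}(X,\R)$. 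I will work throughout with $V$ and its $G$-invariant nef cone.

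Two assertions are then quick. As the $g^*$ ($g\in G$) commute, on each $H^i$ they are simultaneously triangularizable over $\C$; hence products and inverses of null-entropy elements are again null entropy, so $N(G)$ is a subgroup (normal, as $G$ is abelian). From $\rho(g^k)=\rho(g)^k$ one gets $g^k\in N(G)\Rightarrow g\in N(G)$, so $\bar G:=G/N(G)$ is torsion-free; it is finitely generated because it is a quotient of the image of $G$ in $\GL(H^*(X,\Z))$ --- the kernel of that map acts trivially on cohomology, hence has null entropy, hence lies in $N(G)$ --- and a commutative subgroup of some $\GL_N(\Z)$, lying in the unit group of a $\Z$-order, is finitely generated. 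Thus $\bar G$ is free abelian, and granting the rank bound $r\le n-1$ below, the extension $1\to N(G)\to G\to\bar G\to1$ of abelian groups splits ($\bar G$ being free): $G=N(G)\times G_1$ with $G_1\cong\bar G$, and every $g\in G_1\setminus\{\id\}$, having nonzero image in $\bar G$, lies outside $N(G)$ and so has positive entropy.

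The heart is the rank bound, for which I would combine Perron--Frobenius theory for the nef cone with Dinh--Sibony's positive-currents machinery. For positive-entropy $g$, the salient full-dimensional invariant cone yields a nef eigenclass $\theta_g^+$ with $g^*\theta_g^+=d_1(g)\theta_g^+$, $d_1(g)>1$; the crucial analytic input --- the technical core, obtained via positive closed currents and intersection-positivity --- is that $d_1(g)$ is a \emph{simple} eigenvalue of $g^*$ on $H^{1,1}(X,\C)$. Simplicity makes $\R_{>0}\,\theta_g^+$ an $h^*$-stable ray for every $h\in G$ (positive, since $h^*$ preserves the nef cone), so $g\mapsto\chi_g$ is a character $G\to\R_{>0}$ with $\chi_g(g)=d_1(g)>1$. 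A null-entropy $h$ has unit-modulus eigenvalues, forcing $\chi_g(h)=1$; a positive-entropy $h$ has $\chi_h(h)>1$; hence $N(G)=\bigcap_g\Ker\chi_g$, so the $\log\chi_g$, viewed as homomorphisms $\bar G\to\R$, separate points of $\bar G$, and therefore $\rank\bar G$ is at most the dimension of their $\R$-linear span. It remains to see that any $n$ of the $\log\chi_g$ are $\R$-dependent: given positive-entropy $g_1,\dots,g_n$, either two of the rays $\R\theta_{g_i}^+$ coincide --- whence $\chi_{g_i}=\chi_{g_j}$, as both measure how $h^*$ scales that common ray --- or the rays are pairwise non-proportional, in which case the second key input gives $\theta_{g_1}^+\cdots\theta_{g_n}^+>0$ in $H^{2n}$, and applying $h^*$ (the identity there) yields $\prod_i\chi_{g_i}(h)=1$ for all $h$, i.e.\ $\sum_i\log\chi_{g_i}=0$. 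Either way those $n$ functionals are dependent, so the span has dimension $\le n-1$ and $r=\rank\bar G\le n-1$.

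I expect the main obstacle to be precisely the positivity $\theta_{g_1}^+\cdots\theta_{g_n}^+>0$ (and, beneath it, the simplicity of $d_1(g)$). A single dominant eigenclass is degenerate --- iterating $g^*$ on $(\theta_g^+)^2\cdot\omega^{n-2}$ forces $(\theta_g^+)^n=0$ --- so positivity of a product of $n$ distinct ones cannot be formal: one must represent each $\theta_{g_i}^+$ by the positive closed $(1,1)$-current arising as the limit of $d_1(g_i)^{-m}(g_i^m)^*\omega$ and show these currents wedge to a nonzero measure, which is where Dinh--Sibony's estimates on mixed intersections (and the verification that pairwise non-proportional rays really give nonzero product) concentrate. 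For the final clause ``$r=n-1\Rightarrow N(G)$ finite'', I would exploit maximality of the rank to make this current picture rigid enough that $N(G)$ acts through a finite group on $H^*(X,\Z)$ (its many eigenclasses pinning down the action on a finite-index sublattice), reduce via Lieberman's theorem --- the kernel of $\Aut(X)\to\GL(H^*(X,\Z))$ is a finite extension of $\Aut_0(X)$ --- to finiteness of $N(G)\cap\Aut_0(X)$, and obtain that from maximality once more; I would treat this last step as a delicate but secondary endpoint.
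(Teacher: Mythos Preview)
The paper does not prove this theorem: it is quoted from Dinh--Sibony \cite{DS} and used as a black box. So there is no in-paper argument to compare your sketch against; what you have outlined is essentially the Dinh--Sibony strategy itself (Perron--Frobenius on the K\"ahler/nef cone, simplicity of $d_1(g)$, positivity of the cup product of $n$ pairwise non-proportional nef eigenclasses, and the resulting character map).

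That said, there is a real gap in your deduction of the rank bound. From $N(G)=\bigcap_g\Ker\chi_g$ you correctly get that the $\log\chi_g$, viewed as elements of $\Hom(\bar G,\R)$, separate points of $\bar G$, and your dichotomy shows that their $\R$-span has dimension at most $n-1$. But the assertion ``therefore $\rank\bar G$ is at most the dimension of their $\R$-linear span'' is false in general: the single functional $(a,b)\mapsto a+\sqrt{2}\,b$ on $\Z^2$ separates points while spanning only a line. Separation of lattice points by a family of real functionals does \emph{not} bound the rank of the lattice by the dimension of the span of the family. What is missing is \emph{discreteness}: in \cite{DS} one fixes $n$ nef common eigenclasses $L_1,\dots,L_n$ with $L_1\cdots L_n>0$, observes that each $\chi_i(g)$ is an eigenvalue of $g^*$ on the integral lattice $H^2(X,\Z)$ and hence an algebraic integer of degree bounded by a Betti number, and uses this (together with the constraint $\prod_i\chi_i(g)=1$) to show that the image of $g\mapsto(\log\chi_1(g),\dots,\log\chi_{n-1}(g))$ is a \emph{discrete} subgroup of $\R^{n-1}$. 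Discreteness together with $\Ker=N(G)$ then yields $r\le n-1$. You rightly flag the positivity $\theta_{g_1}^+\cdots\theta_{g_n}^+>0$ as the main obstacle, and it is indeed the analytic heart; but this arithmetic discreteness step is an independent ingredient that your argument does not supply, and without it the rank bound does not follow from what you have written.
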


There do exist examples of $n$-dimensional complex tori, Calabi-Yau varieties and
rationally connected varieties $X$ admitting {\it maximal} rank
symmetries $\Z^{\oplus n-1} \cong G \le \Aut(X)$
with every element in $G \setminus \{\id\}$ being of positive entropy;
cf. \cite[Example 4.5]{DS} and \cite[Example 1.7]{CY3}.
All these examples are quotients of tori, as expected (cf. Theorem \ref{ThA} below).
Indeed, it is known that $\SL_n(\Z)$ includes a free abelian group $G$ of rank $n-1$
which has a natural faithful action on the complex $n$-torus $X := E^n$ with $g$ of positive
entropy for every $g \in G \setminus \{\id\}$, where $E$ is an elliptic curve
(cf. \cite[Example 4.5]{DS} for details and references therein).

Our standing assumptions are now (i) $G$ is commutative, (ii) every non-trivial element of $G$
has positive entropy, and (iii) rank $r(G) = \dim X - 1$.

Theorem \ref{ThA} below and Theorem \ref{ThC} in \S 2 are our main results. In Theorem \ref{ThC}(3)
a stricter restriction will be imposed on the $Y$
of Theorem \ref{ThA}(2).

For the definitions of {\it Kodaira dimension}
$\kappa(X)$ and singularities of {\it terminal}, {\it canonical} or {\it klt type},
we refer to \cite{KMM} or \cite[Definitions 2.34 and 7.73]{KM}.
A subvariety $Y \subset X$ is called $G$-{\it periodic}
if $Y$ is stabilized (set theoretically) by a finite-index subgroup of $G$.
Denote by $q(X) := h^1(X, \OO_{X})$ the {\it irregularity} of $X$.
A projective manifold $Y$ is a $Q$-{\it torus}
if $Y = A/F$ for a finite group $F$ acting freely on an abelian variety $A$.

\begin{theorem}\label{ThA}
Let $X$ be an $n$-dimensional normal projective variety
and let $G := {\Z}^{\oplus n-1}$ act on $X$ faithfully
such that every element of $G \setminus \{\id\}$ is of positive entropy.
Then the following hold:

\begin{itemize}
\item[(1)]
Suppose that
$\tau : A \to X$ is a $G$-equivariant
finite surjective morphism from an abelian variety $A$. Then $\tau$
is \'etale outside a finite set $($hence $X$ has only quotient
singularities and is $klt)$; $K_X \sim_{\Q} 0$ $(\Q$-linear equivalence$)$;
no positive-dimensional proper subvariety $Y \subset X$ is $G$-periodic.

\item[(2)]
Conversely, suppose that
no positive-dimensional $G$-periodic
subvariety $Y \subset X$ is either fixed $($point wise$)$ by a finite-index subgroup of $G$,
or is a $Q$-torus with $q(Y) > 0$,
or has $\kappa(Y) = -\infty$.
Suppose
also one of the following two conditions.

\item[(2a)]
$n = 3$, and $X$ is $klt$.
\item[(2b)]
$n \ge 3$, and $X$ has only quotient singularities.

Then $X \cong A/F$ for a finite group $F$ acting freely outside a finite set of an abelian variety $A$.
Further, for some finite-index subgroup $G_1$ of $G$, the action of $G_1$ on $X$ lifts to
an action of $G_1$ on $A$.
\end{itemize}
\end{theorem}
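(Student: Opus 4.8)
The plan is to establish part (1) first, then derive part (2) using a dynamical-degeneration argument together with the minimal model program in the relevant dimensions.

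For part (1), suppose $\tau \colon A \to X$ is a $G$-equivariant finite surjective morphism from an abelian variety. The key observation is that the ramification divisor $R_\tau$ on $A$ would be a $G_A$-periodic divisor for a finite-index subgroup $G_A$ of the lifted group (the lift exists after passing to a finite-index subgroup, since $A \to X$ is finite and $G$ permutes the finitely many intermediate covers). But on the abelian variety $A$, the induced action on $H^{1,1}$ has a commuting free abelian group of rank $n-1$ acting with positive entropy, and a classical fact (essentially the Dinh--Sibony eigenvalue analysis applied to the nef cone, or the theory of the dynamical filtration) forces the only common invariant effective classes to be numerically trivial; a nonzero effective $G_A$-periodic divisor class cannot exist. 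Hence $R_\tau = 0$ in codimension one, so $\tau$ is \'etale in codimension one, and since $A$ is smooth, $X$ has only quotient singularities, hence is klt, and $K_X \sim_{\Q} \tau_* \tau^* K_X / (\deg \tau) \sim_{\Q} 0$ (pushing forward $K_A \sim 0$, using that $\tau^* K_X = K_A$ away from the ramification locus). For the nonexistence of positive-dimensional $G$-periodic $Y \subset X$: its preimage $\tau^{-1}(Y)$ would be $G_A$-periodic in $A$, and a finite-index subgroup stabilizes a component $Y_A$; then $Y_A$ generates (after translation) an abelian subvariety $B \subsetneq A$ preserved up to translation by the lifted action, but the action on $A/B$ would then have rank exceeding $\dim(A/B) - 1$, contradicting Theorem \ref{DSThm} applied to the quotient torus --- or more directly, the restriction of the commuting positive-entropy action to $Y_A$ contradicts the rank bound $r \le \dim Y_A - 1 < n-1$.

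For part (2), the strategy is to run the MMP on $X$ (legitimate in dimension $3$ under (2a), and in the quotient-singularity case (2b) where one can work on a suitable quotient or use the log MMP for klt pairs). The $G$-action, or a finite-index subgroup $G_1$ of it, descends through each MMP step because extremal contractions and flips are canonically determined; crucially, a divisorial contraction would produce a $G_1$-periodic exceptional divisor $E$, and a Mori-fiber-space output would produce a $G_1$-periodic fibration with positive-dimensional fibers $F$ having $\kappa(F) = -\infty$ --- both excluded by hypothesis (the exceptional divisor, being uniruled, has $\kappa = -\infty$ too, and is $G_1$-periodic). Hence the MMP terminates with a minimal model $X'$ birational to $X$, $G_1$-equivariantly, with $K_{X'}$ nef and $X'$ klt. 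By the earlier classification results for minimal varieties with maximal-rank symmetry (this is where one invokes \cite{CY3} and \cite{CZ}, or reproves the needed case), $K_{X'} \equiv 0$ and the Albanese or a suitable \'etale-in-codimension-one cover exhibits $X'$ as a $Q$-torus: more precisely, $\kappa(X') = 0$ and abundance gives $K_{X'} \sim_{\Q} 0$, then the Beauville--Bogomolov decomposition of a quasi-\'etale cover shows the presence of an abelian factor, and the absence of $G_1$-periodic positive-dimensional subvarieties of the forbidden types forces the other factors (Calabi--Yau, hyperk\"ahler) to be absent, since those carry no maximal-rank positive-entropy action. Finally, since $X$ and $X'$ are isomorphic in codimension one (no divisor was contracted) and both have the forbidden-subvariety hypothesis, the birational map $X \dashrightarrow X'$ extends to an isomorphism (a small $G_1$-equivariant modification between klt varieties with $K \sim_\Q 0$ is an isomorphism when neither side contains the relevant flopping curves as periodic loci), giving $X \cong A/F$ and the lift of $G_1$ to $A$.

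The main obstacle I expect is twofold: first, ensuring the MMP can be run and made $G_1$-equivariant in case (2b) when $n > 3$ --- for quotient singularities $X = \widetilde{X}/H$ one would like to lift everything to $\widetilde{X}$ or use known equivariant MMP results, but termination and the equivariance of flips in higher dimension are delicate; second, and more seriously, the step identifying the minimal model $X'$ as a $Q$-torus rather than merely a variety with $K \equiv 0$. The latter requires knowing that Calabi--Yau and hyperk\"ahler manifolds (and more generally klt Calabi--Yau varieties with no abelian factor) do not admit a $\Z^{\oplus n-1}$ of commuting positive-entropy automorphisms; this should follow from the action on $H^{2}$ or $H^{n}$ being too constrained --- for strict Calabi--Yau one uses that the positive-entropy action must act nontrivially on $H^{1,1}$ with a common eigenvector structure incompatible with the small Picard rank and the finiteness of $\Aut$ up to the Weyl group, while the hyperk\"ahler case is ruled out by the signature of the Beauville--Bogomolov form limiting the rank of a positive-entropy abelian subgroup to $n/2 - 1$ or so. Making this last input precise and self-contained (the paper advertises a "classification free" approach) is where the real work lies, and I would isolate it as the companion Theorem \ref{ThC}.
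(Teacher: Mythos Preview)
Your Part~(1) is essentially the paper's argument, with one ordering issue: you only conclude that $\tau$ is \'etale in codimension one, whereas the statement asserts \'etale outside a \emph{finite} set. The paper proves the nonexistence of positive-dimensional $G$-periodic subvarieties first (by the same quotient-torus argument you give), and then observes that the full non-\'etale locus of $\tau$ is $G$-stable and hence zero-dimensional. Also, your handling of the Mori fibre space case in Part~(2) is not right: a general fibre is not $G$-periodic unless its base point is $G_1$-fixed. The correct exclusion (used in Lemma~\ref{nefbig}) is that a $G_1$-equivariant fibration $X\to Z$ with $0<\dim Z<n$ contradicts the rank bound $r(G_1)=n-1$, while $\dim Z=0$ makes $X$ Fano of Picard number one, forcing null entropy.

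Your Part~(2) takes a genuinely different route from the paper, and the route has a real gap at the point you yourself flag. The paper does \emph{not} run an MMP to a minimal model and then appeal to Beauville--Bogomolov. Instead it works directly on $X$ with the nef $\R$-divisor $H=\sum_{i=1}^n L_i$, where the $L_i$ are the Dinh--Sibony common nef eigenvectors of $G$. The hypotheses on $G$-periodic subvarieties are used in two stages: first (Lemmas~\ref{nefbig} and \ref{ample2}, which parallel your exclusion of divisorial and flipping contractions) to show $K_X+sH$ is ample for $s\gg 1$; and second---this is the heart of the paper and what your outline misses---in Lemma~\ref{ample3} to show that $H$ itself is ample. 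The argument there is delicate: if $H$ is not ample there is a $G$-periodic $Y$ with $(K_X)_{|Y}$ ample, so $G_1|_{Y'}\le\Aut_0(Y')$, and a case analysis on $\kappa(Y)$ and the structure of $\Aut_0(Y')$ (linear part, Jacobi map to $\Alb$) eventually forces $Y$ to be of one of the three forbidden types. Once $H$ is ample, $G$-invariance of $K_X$ and of $c_2(X)$ gives $H^{n-1}\cdot K_X=H^{n-2}\cdot K_X^2=0$ and $H^{n-2}\cdot c_2(X)=0$ (here $n\ge 3$ is used); a Hodge-index lemma (Lemma~\ref{D=0}) then yields $K_X\equiv 0$, and Miyaoka pseudo-effectivity of $c_2$ forces $c_2(X)=0$ as a linear form. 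The characterization of torus quotients by $c_1=c_2=0$ (Shepherd-Barron--Wilson, extending Yau) finishes.

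The step you isolate as ``where the real work lies''---eliminating Calabi--Yau and hyperk\"ahler factors after a Beauville--Bogomolov splitting---is precisely what the $c_2$-vanishing argument \emph{replaces}, not what Theorem~\ref{ThC} supplies. Your proposed mechanisms for that elimination do not work: there is no known bound like ``rank of a commuting positive-entropy subgroup on a hyperk\"ahler $2m$-fold is at most $m-1$'', and for strict Calabi--Yau varieties nothing prevents $\Z^{n-1}$ a priori (indeed Remark~\ref{rThA}(1) exhibits a $K3$ surface with a maximal-rank positive-entropy $\Z$ and no periodic curves, showing the analogous conclusion already fails for $n=2$; this is exactly why the paper needs $n\ge 3$ in the $c_2$ step). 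Theorem~\ref{ThC} is not a classification input but a refinement of Theorem~\ref{ThA}(2) under slightly weaker hypotheses on $Y$, proved by the same $c_2$ method with Lemma~\ref{ample4} replacing Lemma~\ref{ample3}. Your MMP-termination worry in case~(2b) for $n>3$ is also real, and the paper sidesteps it entirely: no flips are ever performed, only a single would-be extremal contraction is analysed and shown to be impossible.
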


As a consequence of Theorems \ref{DSThm} and \ref{ThA}, we have:

\begin{corollary}\label{CorB}
Let $X$ be a normal projective variety of dimension $n \ge 3$ with
only
quotient singularities,
and let $G := {\Z}^{\oplus r}$ act on $X$ faithfully for some
$r \ge n-1$
such that every element of $G \setminus \{\id\}$ is of positive entropy.
Then $r = n-1$. Further,
$X \cong A/F$ for a finite group $F$ acting freely
outside a finite set of an abelian variety $A$,
if and only if
$X$ has no positive-dimensional $G$-periodic proper subvariety $Y \subset X$.
\end{corollary}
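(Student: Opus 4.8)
The plan is to deduce everything from Theorems \ref{DSThm} and \ref{ThA}, reducing the hypotheses to a setting where Theorem \ref{ThA} applies. First I would dispense with the claim that $r = n-1$. Since $X$ has only quotient singularities it is $\Q$-factorial, so a resolution argument (or passing to a global index-one cover étale in codimension one) reduces the entropy computation to the smooth Kähler case, where Theorem \ref{DSThm} applies: any commutative $G \le \Aut(X)$ with all non-trivial elements of positive entropy has rank at most $n-1$. Since we are given $r \ge n-1$, this forces $r = n-1$, and moreover the ``Further'' clause of Theorem \ref{DSThm} gives that the null-entropy part $N(G)$ is finite; here $N(G) = \{\id\}$ by hypothesis anyway, so $G \cong \Z^{\oplus n-1}$ acts faithfully with every non-trivial element of positive entropy. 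This is exactly the setup of Theorem \ref{ThA}.

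Next I would prove the ``only if'' direction: if $X \cong A/F$ with $F$ finite acting freely outside a finite set of an abelian variety $A$, then $X$ has no positive-dimensional $G$-periodic proper subvariety. The composite $A \to A/F = X$ is a $G$-equivariant (after replacing $G$ by a finite-index subgroup $G_1$, which does not affect $G$-periodicity) finite surjective morphism from an abelian variety, so Theorem \ref{ThA}(1) applies directly and yields the conclusion that no positive-dimensional proper $Y \subset X$ is $G$-periodic. One should be slightly careful that $G$-periodicity of a subvariety is insensitive to passing to a finite-index subgroup of $G$ — this is immediate from the definition, since a finite-index subgroup of a finite-index subgroup is again finite-index in $G$ — and that the morphism $A \to X$ can be taken $G_1$-equivariant; this last point is where one invokes, or re-derives, the lifting statement, but in the ``only if'' direction we are \emph{given} the torus quotient structure, so the equivariance just needs the action of $G_1$ on $X = A/F$ to lift to $A$, which follows because $A \to X$ is the (quasi-)étale universal-type cover and automorphisms lift along it up to finite index.

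For the ``if'' direction — the substantive one — I would argue as follows. Assume $X$ has no positive-dimensional $G$-periodic proper subvariety $Y$. Then, vacuously, no such $Y$ is pointwise fixed by a finite-index subgroup of $G$, none is a $Q$-torus with $q(Y) > 0$, and none has $\kappa(Y) = -\infty$: all three exclusion clauses in the hypothesis of Theorem \ref{ThA}(2) are satisfied because there are \emph{no} positive-dimensional $G$-periodic proper subvarieties at all. Since $n \ge 3$ and $X$ has only quotient singularities, condition (2b) holds, so Theorem \ref{ThA}(2) applies verbatim and gives $X \cong A/F$ with $F$ finite acting freely outside a finite set of an abelian variety $A$, together with the lifting of the $G_1$-action. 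This completes the proof.

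The main obstacle, conceptually, is not in the logical assembly — which is essentially a packaging of Theorems \ref{DSThm} and \ref{ThA} — but in making sure the reduction in the first paragraph is legitimate: namely that Theorem \ref{DSThm}, stated for compact Kähler \emph{manifolds}, can be applied to a normal projective variety with quotient singularities to bound the rank $r$. I expect this is handled by noting that quotient singularities are rational and $\Q$-factorial, so a $G$-equivariant resolution $\widetilde X \to X$ exists on which $G$ still acts faithfully; the pullback map on cohomology is $G$-equivariant and injective on the relevant Hodge pieces, so positive entropy and commutativity are preserved, and $\dim \widetilde X = n$. Alternatively one cites the Gromov–Yomdin equivalence quoted in the introduction, which is already stated for $\Q$-factorial projective varieties, and the Dinh–Sibony bound in the form valid for such $X$. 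Either way the rank bound $r \le n-1$ holds, and the rest is formal.
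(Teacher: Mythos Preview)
Your proposal is correct and follows essentially the same approach as the paper: invoke Theorem~\ref{DSThm} for $r=n-1$, derive the ``if'' direction vacuously from Theorem~\ref{ThA}(2) under condition~(2b), and for the ``only if'' direction lift a finite-index subgroup $G_1$ to the abelian cover and apply Theorem~\ref{ThA}(1). The paper is terser but points to the same ingredients --- the lifting is the exact sequence $1 \to F \to \widetilde{G} \to G \to 1$ from \ref{pfThA} (via \cite[\S3]{Be}) combined with Lemma~\ref{G/H}, which is precisely the ``automorphisms lift along the quasi-\'etale universal-type cover up to finite index'' step you sketch; your observation that $G$-periodicity and $G_1$-periodicity coincide is exactly what the paper uses to close.
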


The proof of Theorem \ref{ThC} gives the following, which was also essentially proved
in \cite{CY3}.

\begin{corollary}\label{CorD}
Let $X$ be a normal projective variety of dimension $n \ge 3$
and let $G := {\Z}^{\oplus r}$ act on $X$ faithfully for some
$r \ge n-1$
such that every element of $G \setminus \{\id\}$ is of positive entropy.
Then $r = n-1$.
Suppose further that both $X$ and $(X, G)$ are minimal in the sense of $\ref{min}$,
and either $X$ has only quotient singularities, or $X$ is a $klt$ threefold.
Then $X \cong A/F$ for a finite group $F$ acting freely
outside a finite set of an abelian variety $A$.
\end{corollary}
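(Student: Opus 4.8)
The plan is to deduce Corollary \ref{CorD} from Theorem \ref{ThA}(2) by verifying its hypotheses under the minimality assumption on $(X,G)$, after first disposing of the rank claim. The bound $r \le n-1$ is immediate from Theorem \ref{DSThm} (applied on a $G$-equivariant resolution, or directly since $X$ is $\Q$-factorial with quotient or klt singularities so Gromov--Yomdin applies), and combined with the hypothesis $r \ge n-1$ this forces $r = n-1$; thus $G \cong \Z^{\oplus n-1}$ and we are exactly in the setting of Theorem \ref{ThA}. It then remains to check that no positive-dimensional $G$-periodic subvariety $Y \subset X$ is pointwise fixed by a finite-index subgroup of $G$, nor a $Q$-torus with $q(Y)>0$, nor has $\kappa(Y)=-\infty$.

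For this I would invoke the minimality of $(X,G)$ in the sense of $\ref{min}$: a $G$-periodic subvariety $Y$, after passing to a finite-index subgroup $G_Y \le G$ stabilizing it, carries an induced action of $G_Y$, and one studies its rank on $Y$. The key dynamical input is that a finite-index subgroup of $G$ acting on a proper $G$-periodic subvariety $Y$ of dimension $d < n$ cannot have too many independent positive-entropy elements simultaneously on $X$ and on $Y$; minimality of the pair should be designed precisely to rule out the ``bad'' $Y$'s — either $Y$ would contradict maximality/minimality of the configuration, or one could replace $X$ by a model where such $Y$ does not occur. Concretely: if $Y$ is pointwise fixed by a finite-index $G_1 \le G$, then $G_1$ acts on the normal bundle directions and one gets a contradiction with the positive-entropy hypothesis by a trace/eigenvalue count along $Y$; if $\kappa(Y) = -\infty$ or $Y$ is a positive-irregularity $Q$-torus, these are exactly the cases excluded in \cite{CY3} via the equivariant MMP and the structure of the Albanese and Iitaka fibrations, and minimality ensures the relevant fibrations are trivial or that $Y$ does not arise. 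Having checked all three exclusions, Theorem \ref{ThA}(2) (case (2a) for a klt threefold, case (2b) for quotient singularities in dimension $n \ge 3$) yields $X \cong A/F$ with $F$ finite acting freely outside a finite set, and the lifting of a finite-index subgroup of $G$ to $A$ comes along for free.

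The main obstacle I anticipate is verifying that minimality of $(X,G)$ genuinely kills all three types of forbidden $G$-periodic subvarieties; this is where the real content lies, since a priori a minimal pair could still contain, say, a $G$-periodic curve of genus $\ge 2$ (which is harmless) alongside some problematic higher-dimensional $Y$. The resolution should come from a dimension-induction on $G$-periodic subvarieties combined with the observation that on such a $Y$ the induced group has rank $\ge \dim Y$ only in the degenerate cases, together with the classification-type results of \cite{CY3} for the Kodaira-dimension and $Q$-torus alternatives. One must also be careful that passing to finite-index subgroups and to the subvariety $Y$ preserves the positive-entropy property for all non-trivial elements — this is automatic for finite-index subgroups of $G$, but restricting to $Y$ requires that $g|_Y$ still has positive entropy, which may fail and must be handled by the minimality bookkeeping rather than assumed.
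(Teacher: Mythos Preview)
Your proposal has a genuine gap: you never use the hypothesis that $X$ itself is minimal (i.e., that $K_X$ is nef), and you cannot actually verify the three exclusions in Theorem~\ref{ThA}(2) from minimality of the pair $(X,G)$ alone. The definition of a minimal pair in \ref{min} only says that every $G_1$-equivariant birational morphism from $X$ to a klt pair is an isomorphism; it controls exceptional loci of contractions (which are uniruled), but it says nothing about an arbitrary $G$-periodic subvariety $Y\subset X$ that happens to be a $Q$-torus with $q(Y)>0$, or to be fixed pointwise by a finite-index subgroup. Your sketches for these cases (``trace/eigenvalue count along $Y$'', ``classification-type results of \cite{CY3}'') do not connect to the actual content of the minimality assumption, and in fact the paper needs the separate, much stronger condition (3) in Theorem~\ref{ThC} precisely because minimality of $(X,G)$ does not exclude such $Y$.

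The paper's own proof avoids Theorem~\ref{ThA}(2) entirely and instead exploits \emph{both} minimality assumptions directly. First, since $g^*K_X\sim K_X$ for all $g\in G$, the argument of Lemma~\ref{ample} gives $H^{n-1}\cdot K_X=0$ for the nef and big class $H=\sum L_i$; because $X$ is minimal, $K_X$ is nef, and then \cite[Lemma~2.2]{nz2} forces $K_X\equiv 0$. Second, with $K_X\equiv 0$ one has $K_X+sH\equiv sH$ nef and big, and the proof of Lemma~\ref{ample2} (which uses only the minimality of the pair $(X,G)$, i.e.\ condition (2) of Theorem~\ref{ThC}, to rule out the non-isomorphic contraction $\psi$) shows $K_X+sH$ is ample, hence $H$ is ample. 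From there one jumps straight to the Chern class computation in \ref{pfThA}: $c_2(X)\cdot H^{n-2}=0$ with $H$ ample forces $c_2(X)=0$, and together with $c_1(X)\equiv 0$ one invokes \cite{SW} to obtain the abelian cover. This bypasses Lemmas~\ref{ample3} and \ref{ample4}---and therefore the delicate conditions on $G$-periodic $Y$---altogether.
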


\begin{remark}\label{rThA}
(1) In Theorems \ref{ThA} (2) and \ref{ThC},
we need to assume that $\dim X \ge 3$
which is {\it used at the last step} \ref{pfThA} to show the vanishing of
the second Chern class $c_2(X)$.
In fact, inspired by the comment of the referee, one notices that
a complete intersection $X \subset \PP^2 \times \PP^2$
of two very general hypersurfaces of type $(1, 1)$ and $(2, 2)$
is a $K3$ surface (called Wehler's surface) of Picard number two,
$X$ has an automorphism $g$ of entropy $2 \log(2 + \sqrt{3}) > 0$, and
$X$ contains no $(-2)$-curve, so there is no $g$-periodic curve on $X$;
cf.~\cite[Theorem 2.5, Proposition 2.6]{We}, \cite[Lemma 2.1, Proposition 2.5]{Si}.
Thus, both $X$ and the pair $(X, \langle g \rangle)$ are minimal in the sense of \ref{min}.
However, $X$ is not birational to the quotient of a complex torus, 
because a (smooth) projective $K3$ surface $X$ birational to the quotient of a complex torus
has the transcendental lattice of rank $\le$
(that of a complex $2$-torus), i.e., $\le 5$, and hence has Picard number $\ge (h^2(X, \C) - 5)$
which is $17$.

(2) In Theorem \ref{ThA} (2) (resp. Theorem \ref{ThC}), we can weaken the assumption on $G$
as a condition on $G^* := G_{|\NS_{\R}(X)}$ (cf. \cite[Theorem 4.7]{DS}):

\par \vskip 1pc \noindent
 ``$G^* \cong \Z^{\oplus n-1}$ (resp. $G^* \cong \Z^{\oplus n-1}$,
$G$ is virtually solvable)
and every element of $G^* \setminus \{\id\}$
is of positive entropy."

\par \vskip 1pc \noindent
But we need also to replace the last sentence

\par \vskip 1pc \noindent
``Further, \dots of $G_1$ on $A$." in Theorems \ref{ThA} (2) and \ref{ThC} as:

\par \vskip 1pc \noindent
``Further, the action of $G$ on $X$ lifts to an action of a group
$\widetilde{G}$ on $A$ with $\widetilde{G}/\Gal(A/X) \cong G$."

\par \vskip 1pc \noindent
The virtual solvability of $G$ is used in the middle of Claim \ref{c1} and end of the proof of Lemma \ref{ample4}.
\end{remark}

Our bimeromorphic point of view, in terms of the minimality assumption in
\ref{CorD} and \ref{ThC},
towards the dynamics study seems natural,
since one may blow up some Zariski-closed and $G$-stable proper subset of $X$ 
(if such subset exists) to get another pair
$(X', G)$ which is essentially the same as the original pair $(X, G)$.

The very starting point of our proof is the existence of enough
nef eigenvectors $L_i$ of $G$,
due to the fundamental work of Dinh-Sibony \cite{DS}. The minimal model program
(cf. \cite{KM}) is used with references provided for non-experts. Our main contribution lies
in Lemmas \ref{ample3} and \ref{ample4} where we show that the pair
$(X, G)$ can be replaced with
an equivariant one so that $H:= \sum_{i=1}^n \, L_i$ is an ample divisor.
To conclude, we prove a result of Hodge-Riemann type for singular varieties
to show the vanishing of Chern classes $c_i(X)$ ($i = 1, 2$),
utilizing $H^{n-i} \cdot c_i(X) = 0 = H^{n-2} \cdot c_1(X)^2$.
Then we use the characterization of \'etale quotient of a complex torus as
the compact K\"ahler manifold $X$ with vanishing Chern classes $c_i(X)$ ($i = 1, 2$)
(cf. \cite[\S 1]{Be}) and its generalization to singular varieties (cf. \cite{SW}).

For a possible generalization of the proof to a K\"ahler $n$-fold $X$,
we remark that the restriction `$\rank r(G) = n-1$' implies that $X$ is either Moishezon and hence projective,
or has algebraic dimension $a(X) = 0$ (cf. \cite[Theorem 1.2]{Tits}).
Thus the case $a(X) = 0$ remains to be treated. See a related remark in
\cite[\S 3.6]{Ca06}.

\par \vskip 1pc \noindent
{\bf Acknowledgement.}
I would like to thank
the organizers of the
Workshop on Higher Dimensional Algebraic Geometry,
29 March - 2 April 2010, National Taiwan University,
for the opportunity to talk about the results of this paper,
and the referee for many constructive suggestions to improve the paper.

\section{Proof of Theorem \ref{ThA}}

In this section we prove Theorem \ref{ThA} in the Introduction and its slightly
generalized version
Theorem \ref{ThC} below.

We use the {\bf terminology} and {\bf notation} in \cite{Hart}, \cite{KMM} and \cite{KM}.
By $G_{|Y}$, we mean that
there is a natural (from the context) induced action of $G$ on $Y$.

\begin{definition}\label{min}
A normal projective variety $X$ is {\it minimal} if it has only
$klt$ singularities (cf. \cite[Definition 2.34]{KM}) and the {\it canonical divisor} $K_X$ is nef.
Here a divisor $D$ is {\it nef} if the intersection $D \cdot C = \deg(D_{|C}) \ge 0$
for every curve $C$ in $X$. It is known that a quotient singularity is $klt$
and the converse is true in dimension two (cf. \cite[Propositions 5.20 and 4.18]{KM}).

Let $G \le \Aut(X)$ be a subgroup.
A pair $(X, G)$ is {\it minimal} if: for every
finite-index subgroup $G_1 \le G$, every
$G_1$-equivariant birational morphism $\sigma: X \to X_1$
onto a variety $X_1$ so that the pair $(X_1, \Delta)$ is $klt$ for some
effective (boundary) $\R$-divisor $\Delta$ (cf. \cite[Definition 2.34]{KM}), is an isomorphism.
Here $\Delta$ is not required to be $G_1$-stable.

We remark that every fibre of such $\sigma$
is rationally chain connected by \cite[Corollary 1.5]{HM}. Thus every irreducible component of
the {\it exceptional locus} $\Exc(\sigma) \subset X$
(the subset of $X$ along which $\sigma$ is not isomorphic) is $G$-periodic and
uniruled.

The two minimality definitions above are slightly different from the ones in \cite{CY3}.

Although the Minimal Model Program predicts the existence
of a minimal model $X_m$ for every non-uniruled projective variety $X$
and such existence is a theorem now for varieties of general type
(cf. \cite{BCHM}),
it is much harder to prove the existence of a minimal pair $(X_m, G)$ for any given pair $(X, G)$,
because the regular action of $G$ on $X$ induces a priori only a birational action of $G$ on $X_m$.
\end{definition}

The result below is more precise than \ref{ThA}(2)
in terms of the restriction on $Y$ in \ref{ThC}(3) and is applicable
under
the Good Minimal Model Program which predicts that \ref{ThC} (4) and (5)
are always true (cf. \cite{BCHM} for its recent breakthrough, Remark \ref{rThC}).

\begin{theorem}\label{ThC}
Let $X$ be a normal projective variety of dimension $n \ge 3$,
and let $G := {\Z}^{\oplus r}$ act on $X$ faithfully for some
$r \ge n-1$
such that every element in $G \setminus \{\id\}$ is of positive entropy.
Then $r = n-1$.
Suppose further the following five conditions.

\begin{itemize}
\item[(1)]
Either $X$ has only quotient singularities, or is a $klt$ threefold.
\item[(2)]
The pair $(X, G)$ is minimal in the sense of $\ref{min}$.
\item[(3)]
No positive-dimensional $G$-periodic
subvariety $Y \subset X$ is either fixed $($point wise$)$ by a finite-index subgroup
of $G$, or a $Q$-torus with $q(Y) > 0$, or a rational curve.
\item[(4)]
Every projective manifold $Y$ with $\dim Y \le n-1$, $\kappa(Y) = -\infty$ and $q(Y) > 0$,
is uniruled.
\item[(5)]
Every projective manifold $Y$ with $\dim Y \le n-2$, $\kappa(Y) = 0$ and $q(Y) > 0$,
has a good minimal model in the sense of Kawamata.
\end{itemize}

Then $X \cong A/F$ for a finite group $F$
acting freely outside a finite set of an abelian variety $A$.
Further, for some finite-index subgroup $G_1$ of $G$, the action of $G_1$ on $X$ lifts to
an action of $G_1$ on $A$.
\end{theorem}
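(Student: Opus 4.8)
The plan is to follow the strategy announced in the introduction. By Theorem \ref{DSThm} (and \cite[Theorem 4.7]{DS}) we already have $r=n-1$, so assume $G=\Z^{\oplus n-1}$. The first step is to extract, from \cite[\S 4]{DS}, nef classes $L_1,\dots,L_n$ on $X$ that are simultaneous eigenvectors, $g^*L_i=\chi_i(g)L_i$, with $L_1\cdots L_n>0$ and with characters $\log\chi_i\in\Hom(G,\R)\cong\R^{n-1}$ subject to the single relation $\sum_i\log\chi_i=0$ and such that any $n-1$ of them are $\R$-linearly independent. Since an automorphism preserves every intersection number, the number $L_1^{b_1}\cdots L_n^{b_n}\cdot D_1\cdots D_k$, with $\sum_ib_i+k=n$ and each $D_j$ a $G$-invariant class, is fixed by every $g$, forcing $\prod_i\chi_i(g)^{b_i}=1$ and hence $(b_1,\dots,b_n)$ proportional to $(1,\dots,1)$; thus such a number vanishes unless $k=0$ and all $b_i=1$. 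Therefore $H:=\sum_iL_i$ is nef and big, with $H^n=n!\,L_1\cdots L_n>0$, while $H^{n-1}\cdot K_X=H^{n-2}\cdot K_X^2=H^{n-2}\cdot c_2(X)=0$, as $K_X$ and the (reflexive/orbifold) class $c_2(X)$ are $G$-invariant; it is precisely here that $\dim X\ge 3$ enters, as flagged in Remark \ref{rThA}, since for $n=2$ the monomial count imposes no constraint on $c_2$.

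Next I would show that $X$ is not uniruled, so that minimality of $(X,G)$ makes $X$ itself minimal. If $X$ were uniruled, its ($G$-equivariant, after blow-ups) MRC fibration $X\dashrightarrow Z$ would have $0<\dim Z<n$ — the case $\dim Z=0$ being excluded because $\Aut$ of a rationally connected, hence of a Fano, variety acts on cohomology through a finite group, contradicting positivity of entropy — and applying Theorem \ref{DSThm} to a smooth model of $Z$ would show that a nontrivial subgroup of $G$ acts on $Z$ with null entropy, hence, after passing to a finite-index subgroup, stabilizes a general rationally connected fibre; an induction on $n$ via Theorem \ref{DSThm} together with conditions (3) and (4) then produces a $G$-periodic subvariety of a type excluded by (3). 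Granting non-uniruledness, since $(X,G)$ is minimal, $K_X$ is nef: $G$ fixes $K_X$ and so, after passing to a finite-index subgroup $G_1$, fixes a $K_X$-negative contractible extremal ray if one exists, and the resulting $G_1$-equivariant contraction is either divisorial or flipping onto a klt variety (contradicting minimality) or of fibre type (contradicting non-uniruledness). With $K_X$ nef and $H^{n-1}\cdot K_X=0$, a Hodge-index / Khovanskii--Teissier argument gives $K_X\equiv 0$, and klt-ness upgrades this to $K_X\sim_\Q 0$.

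The crux is to reduce to the case where $H$ is ample. The locus $\mathrm{Null}(H)$ — the union of the positive-dimensional $W\subset X$ with $H^{\dim W}\cdot W=0$ — is $G$-stable: if $H^{\dim W}\cdot W=0$ then, every square-free monomial $\prod_{k\in S}L_k$ being a product of nef classes, each such monomial meets $W$ in zero, whence $H^{\dim W}\cdot g(W)=(g^*H)^{\dim W}\cdot W=0$. If $H$ is not ample then, by the Nakai--Moishezon criterion, $\mathrm{Null}(H)\ne\emptyset$ and its components are positive-dimensional $G$-periodic $W\subsetneq X$ with $H|_W$ nef but not big. On a normalization of such a $W$, of dimension $<n$, one restricts the eigenclasses $L_i$, applies Theorem \ref{DSThm} in lower dimension, and invokes conditions (3), (4), (5): the outcome should either violate these conditions or contradict minimality of $(X,G)$, forcing $H$ to be ample. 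Carrying this out — equivalently, producing the equivariant birational model on which $H$ is ample without disturbing klt-ness, regularity of the action, minimality, or conditions (3)--(5) — is, I expect, the main obstacle; it is the content of the two lemmas advertised in the introduction.

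Finally, with $H$ ample, $K_X\sim_\Q 0$, and $H^{n-2}\cdot c_2(X)=H^{n-2}\cdot c_1(X)^2=0$, I would prove a Hodge--Riemann type inequality for the singular klt variety $X$ — a Bogomolov--Yau statement for $(X,H)$ — so that $H^{n-2}\cdot c_2(X)\ge 0$ with equality only if $X$ has vanishing Chern classes $c_1,c_2$; being in the equality case and applying the characterization of \'etale quotients of complex tori (\cite[\S 1]{Be} and its singular generalization \cite{SW}), we get $X\cong A/F$ with $F$ acting freely outside a finite set of an abelian variety $A$. Since $A\to X$ is \'etale in codimension $\ge 2$, the induced $G$-action on the finite set of degree-$|F|$ covers of the relevant smooth locus has a finite-index stabilizer $G_1$ of the cover $A$, which therefore lifts to an action of $G_1$ on $A$ (using normality of $A$). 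Together with $r=n-1$ this gives the theorem.
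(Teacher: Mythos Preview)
Your overall architecture is right, but the order in which you try to run it creates two genuine gaps.

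First, the step ``minimality of $(X,G)$ makes $X$ itself minimal'' does not go through as written. You argue that if $K_X$ is not nef then $G$, after passing to a finite-index subgroup, fixes a $K_X$-negative extremal ray. But the cone theorem gives no finiteness for $K_X$-negative rays on a bare klt variety; there may be infinitely many, and $G$ need not stabilize any one of them. This is precisely why the paper works with $K_X+sH$ instead (Lemma~\ref{nefbig}): writing the nef and big $H$ as $E/k+A_k$ so that $(X,E/k+A_k)$ is klt, the cone theorem for the \emph{pair} yields only finitely many $(K_X+H)$-negative rays, which $G$ then permutes and a finite-index subgroup fixes. (Incidentally, your aside that a rationally connected variety has $\Aut$ acting on cohomology through a finite group is false --- blow-ups of $\PP^2$ already give counterexamples --- so the $\dim Z=0$ case of your non-uniruledness argument also needs the Picard-number-one/$\Aut_0$ reasoning the paper uses.)

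Second, and more seriously, your reversed order --- first $K_X\equiv 0$, then $H$ ample --- removes the very tool the paper uses to prove $H$ ample. In the paper's order one first shows $K_X+sH$ is ample (Lemma~\ref{ample2}); restricting to a $G$-periodic $Y$ and using $H_{|Y}\equiv 0$ gives that $(K_X)_{|Y}$ is \emph{ample}. This ample class, being $G_1$-fixed, is exactly what forces $G_1|_{Y'}\le\Aut_0(Y')$ via Lieberman--Fujiki, and that containment is the engine driving the entire case analysis (Lemma~\ref{ample4}) where conditions (3)--(5) are actually used. In your order $(K_X)_{|Y}\equiv 0$, so you have no ample $G_1$-invariant class on $Y$, and your proposed substitute (``restrict the $L_i$ and apply Theorem~\ref{DSThm} in lower dimension'') does not work: on a $G$-periodic $Y$ every product $L_{i_1}\cdots L_{i_k}\cdot Y$ vanishes, so the restricted classes carry no bigness and the Dinh--Sibony machinery gives nothing. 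The paper's sequencing --- $K_X+sH$ nef and big, then ample, then $H$ ample, and only then $K_X\equiv 0$ --- is not an accident but the mechanism that makes the argument close.
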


\begin{remark}\label{rThC}
\begin{itemize}
\item[(1)]
The Condition (5) in Theorem \ref{ThC} is always true when $n = \dim X \le 5$ (cf.~\cite[3.13]{KM}).
\item[(2)]
The condition (4) in Theorem \ref{ThC} is always true when $n \le 5$, by
applying Iitaka's $C_{k, r}$ to the {\it Albanese map} $\alb_Y : Y \to \Alb(Y)$.
Here the {\it Albanese} $\Alb(Y)$ of $Y$ is a complex torus and every holomorphic map
from $Y$ to a complex torus factors through the albanese map $\alb_Y$
(cf. \cite[Ch IV]{Ue}).
In particular, every subgroup $H \le \Aut(Y)$
induces a natural action of $H$ on $\Alb(Y)$ so that $\alb_Y$ is $H$-equivariant.
\end{itemize}
\end{remark}

We begin with two lemmas.

\begin{lemma}\label{G/H}
Let $G$ be a group and $H \lhd G$ a finite normal subgroup such that
$$G/H = \langle \overline{g_1} \rangle \times \cdots \times \langle \overline{g_r} \rangle
\cong \Z^{\oplus r}$$
for some $r \ge 1$ and $g_i \in G$.
Then there is an integer $s > 0$ such that
$G_1 := \langle g_1^s, \dots, g_r^s \rangle$ satisfies
$$G_1 = \langle g_1^s \rangle \times \cdots \times \langle g_r^s \rangle
\cong \Z^{\oplus r}$$
and it is a finite-index subgroup of $G$; further,
the quotient map $\gamma : G \to G/H$ restricts to an
isomorphism $\gamma_{|G_1} : G_1 \to \gamma(G_1)$ onto a finite-index subgroup of $G/H$.
\end{lemma}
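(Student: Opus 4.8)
The plan is to work entirely inside the abstract group $G$ and extract a suitable finite index $s$ from the finiteness of $H$. First I would observe that since $H$ is finite, for each $i$ the image $\overline{g_i} \in G/H$ has the property that the subgroup $\langle g_i \rangle \le G$ maps onto $\langle \overline{g_i}\rangle \cong \Z$ with kernel $\langle g_i \rangle \cap H$, which is a finite cyclic group; hence $\langle g_i \rangle$ is a finitely generated abelian group of rank one, and after replacing $g_i$ by a suitable power $g_i^{s_i}$ (with $s_i$ a multiple of the order of the torsion) we may assume $\langle g_i^{s_i}\rangle \cong \Z$ and $\langle g_i^{s_i}\rangle \cap H = \{\id\}$. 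Taking $s$ to be a common multiple of all the $s_i$ (for instance $s = |H|!\,$ works uniformly, since $g_i^{|H|}$ already kills any torsion coming from $H$), each $\langle g_i^s \rangle$ is infinite cyclic and meets $H$ trivially, so $\gamma$ restricts to an injection on each $\langle g_i^s\rangle$.

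Next I would check that $G_1 := \langle g_1^s,\dots,g_r^s\rangle$ is in fact the internal direct product of these cyclic groups and that $\gamma_{|G_1}$ is injective. Since the $\overline{g_i}$ generate $G/H$ as a direct sum, the images $\overline{g_i^{\,s}} = \overline{g_i}^{\,s}$ generate a subgroup $\gamma(G_1) = \langle \overline{g_1}^s\rangle \times \cdots \times \langle \overline{g_r}^s\rangle \cong \Z^{\oplus r}$ of $G/H$, which has finite index (index $s^r$) in $G/H$. Now suppose $g_1^{s a_1}\cdots g_r^{s a_r} = \id$ in $G_1$; applying $\gamma$ and using that $G/H$ is the stated direct sum forces $s a_i \equiv 0$ in $\langle\overline{g_i}\rangle \cong \Z$ for every $i$, hence $a_i = 0$. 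This simultaneously shows $\gamma_{|G_1}$ is injective and that $G_1 = \langle g_1^s\rangle\times\cdots\times\langle g_r^s\rangle \cong \Z^{\oplus r}$.

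Finally, $\gamma(G_1)$ has index $s^r$ in $G/H$ and $H$ is finite, so $G_1$ has index $s^r\cdot|H|$ in $G$ — in particular $G_1$ is a finite index subgroup — and $\gamma_{|G_1}: G_1 \xrightarrow{\ \sim\ } \gamma(G_1)$ is an isomorphism onto a finite index subgroup of $G/H$, as claimed.

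The only genuinely delicate point is the uniform choice of $s$: one must ensure a single exponent kills the $H$-torsion in every $\langle g_i\rangle$ at once and simultaneously keeps the images independent. This is handled cleanly by noting that the possible torsion subgroups $\langle g_i\rangle \cap H$ all have order dividing $|H|$, so $s = |H|$ (or any common multiple) already does the job for killing torsion, and any further multiplication only shrinks the groups and cannot destroy injectivity of $\gamma_{|G_1}$; there is no real obstacle beyond this bookkeeping.
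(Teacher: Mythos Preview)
There is a genuine gap: you never establish that the generators $g_i^s$ commute with one another, and this is precisely the heart of the matter. Your second paragraph checks only that no relation of the special form $g_1^{sa_1}\cdots g_r^{sa_r}=\id$ holds; but if the $g_i^s$ do not commute, a general element of $G_1=\langle g_1^s,\dots,g_r^s\rangle$ need not be expressible in that form. In particular the commutators $[g_i^s,g_j^s]$ lie in $G_1\cap H=\ker(\gamma_{|G_1})$, and nothing you have said forces these to be trivial. Thus neither the injectivity of $\gamma_{|G_1}$ nor the isomorphism $G_1\cong\Z^{\oplus r}$ follows from your relation check. Your first paragraph, which arranges $\langle g_i^s\rangle\cap H=\{\id\}$ for each $i$ separately, says nothing about $G_1\cap H$ as a whole.

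The paper's proof addresses exactly this missing point and nothing else: it observes that $[G,G]\le H$, so the commutators $[g_1^t,g_2]$ for $t>0$ take only finitely many values in $H$; by pigeonhole two of them coincide, and from $[g_1^{t_1},g_2]=[g_1^{t_2},g_2]$ one extracts an exponent $s_{12}=t_2-t_1$ with $g_1^{s_{12}}$ commuting with $g_2$. Iterating over all pairs and taking a common multiple produces a single $s$ with all $g_i^s$ mutually commuting. Once that is in hand, your argument in the second and third paragraphs goes through verbatim. Note that your proposed choice $s=|H|$ kills torsion in each cyclic $\langle g_i\rangle$ but does not obviously force $[g_i^{|H|},g_j^{|H|}]=1$ in general, so even if one believes such an $s$ exists, a separate argument (such as the paper's pigeonhole) is required.
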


\begin{proof}
We only need to find some $s > 0$ such that $g_i^s$ and $g_j^s$ are commutative to each other for all $i, j$.
Since $G/H$ is abelian, the commutator subgroup $[G, G] \le H$.
Thus the commutators $[g_1^t, g_2]$ ($t > 0$) all belong to $H$.
The finiteness of $H$ implies that $[g_1^{t_1}, g_2] = [g_1^{t_2}, g_2]$ for some $t_2 > t_1$,
which implies that $g_1^{s_{12}}$ commutes with $g_2$, where $s_{12} := t_2 - t_1$.
Similarly, we can find integers $s_{1j} > 0$ such that $g_1^{s_{1j}}$ commutes with $g_j$.
Set $s_1 := s_{12} \times \cdots \times s_{1r}$. Then $g_1^{s_1}$ commutes with
every $g_j$.
Similarly, for each $i$,
we can find an integer $s_{i} > 0$ such that $g_i^{s_i}$ commutes with $g_j$ for all $j$.
Now $s := s_1 \times \cdots \times s_{r}$ will do the job. This proves the lemma.
\end{proof}

\begin{lemma}\label{D=0}
Let $X$ be an $n$-dimensional projective variety, $H_i$ nef and big $\R$-Cartier divisors
and $D$ an $\R$-Cartier divisor such that $H_1 \cdots H_{n-1} \cdot D = 0$.
Then we have:
\begin{itemize}
\item[(1)]
$H_1 \cdots H_{n-2} \cdot D^2 \le 0$.
\item[(2)]
Suppose that $H_1, \dots, H_{n-2}$ are ample divisors.
Then $H_1 \cdots H_{n-2} \cdot D^2 = 0$ holds if and only if
$D \equiv 0$ $($numerically$)$, i.e., $D \cdot C = 0$ for every curve $C$ on $X$.
\end{itemize}
\end{lemma}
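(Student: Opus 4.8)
The plan is to deduce both parts from the Hodge Index Theorem for a single ample divisor, applied after cutting down by generic members of the linear systems attached to the $H_i$. First I would reduce to the case where $H_1, \dots, H_{n-2}$ are ample and $H_{n-1}$ is nef and big, since part (1) follows from part (2) by a limiting/perturbation argument: write each nef and big $H_i$ as a decreasing limit of ample $\R$-divisors $H_i + \epsilon A$ (for a fixed ample $A$), note that $H_1 \cdots H_{n-1} \cdot D = 0$ gets perturbed to a quantity that is $O(\epsilon)$, and pass to the limit in the inequality $H_1 \cdots H_{n-2} \cdot D^2 \le 0$. So the heart of the matter is: given ample $\R$-Cartier divisors $H_1, \dots, H_{n-2}$, a nef and big $H_{n-1}$, and $H_1 \cdots H_{n-1} \cdot D = 0$, show $H_1 \cdots H_{n-2} \cdot D^2 \le 0$, with equality iff $D \equiv 0$.

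For this I would take very general complete intersection surfaces. Choosing generic effective divisors in sufficiently divisible multiples of the $H_i$ (using that $\R$-divisors can be approximated, or that it suffices to prove the statement for $\Q$-divisors and then scale), let $S = H_1 \cap \dots \cap H_{n-2}$ be a generic complete intersection surface, which by Bertini is irreducible and reduced with at worst the singularities forced by $X$; one may further pass to its normalization $\nu\colon \widetilde{S} \to S$ and then to a resolution, pulling back $D$ and $H_{n-1}$. On this smooth projective surface we have $(H_{n-1})|_{\widetilde S}$ nef and big and $(D|_{\widetilde S}) \cdot (H_{n-1}|_{\widetilde S}) = 0$; the Hodge Index Theorem then gives $(D|_{\widetilde S})^2 \le 0$, which is exactly $H_1 \cdots H_{n-2} \cdot D^2 \le 0$. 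For the equality case in (2): if $(D|_{\widetilde S})^2 = 0$ and $(D|_{\widetilde S}) \cdot (\text{nef and big}) = 0$, the refined Hodge Index statement forces $D|_{\widetilde S} \equiv 0$ on $\widetilde S$; since $\widetilde S$ moves in a family covering $X$ and, for $n \ge 3$, a generic such surface meets any given curve $C$ (as $H_1, \dots, H_{n-2}$ are ample), one concludes $D \cdot C = 0$ for all curves $C$, hence $D \equiv 0$ on $X$. The converse implication in (2) is trivial.

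The step I expect to be the main obstacle is making the "generic complete intersection surface" argument fully rigorous in the singular $\R$-divisor setting: one must be careful that the intersection numbers $H_1 \cdots H_{n-2} \cdot (-)$ computed on $X$ agree with the corresponding intersection numbers computed on the resolution of the generic complete intersection surface (this is a standard consequence of the projection formula together with the fact that a generic complete intersection avoids any fixed subvariety of codimension $\ge 1$ in its support, but it needs the $H_i$ to be genuinely ample, not merely nef, which is why (1) is only proved as an inequality). A clean way to sidestep delicate Bertini issues over singular $X$ is to fix a very ample $A$, work with $H_i$ replaced by large multiples $m H_i$ which are then very ample, choose smooth members on a resolution, and take $m \to \infty$; alternatively, invoke a Bertini-type irreducibility theorem for generic complete intersections of ample divisors directly. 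Either way the only real input is the classical Hodge Index Theorem on a smooth surface plus multilinearity and the projection formula for intersection numbers.
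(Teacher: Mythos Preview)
Your overall plan is sound and close in spirit to the paper's, but there is a real gap in the part~(2) argument. You write that since ``a generic such surface meets any given curve $C$'', the conclusion $D|_{\widetilde S} \equiv 0$ yields $D \cdot C = 0$. Meeting $C$ at finitely many points does not help: $D|_{\widetilde S} \equiv 0$ gives $D \cdot \Gamma = 0$ only for curves $\Gamma$ \emph{contained in} $S$, not for a curve that merely touches $S$. What you actually need is a complete-intersection surface $S$ with $C \subset S$ on which the numbers $(D|_S)^2$ and $(D|_S)\cdot(H_{n-1}|_S)$ still agree with $H_1 \cdots H_{n-2} \cdot D^2$ and $H_1 \cdots H_{n-1} \cdot D$; arranging $S \supset C$ while keeping $S$ integral of the right dimension forces you into constrained linear systems and some non-trivial Bertini work, none of which is in your sketch.

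The paper sidesteps this by induction on $n$ rather than cutting straight to a surface. Given $C$, it passes to a resolution $\sigma \colon X' \to X$ with $-E$ relatively ample for some exceptional $E$, writes the ample class $\sigma^* H_1 - E \equiv \sum r_i A_i$ as a positive combination of irreducible ample divisors, and \emph{chooses $A_1$ so that $B_1 := \sigma(A_1)$ contains $C$}. Since $E$ is $\sigma$-exceptional, the projection formula transfers both vanishing hypotheses from $H_1'$ to each $A_i$; part~(1) then forces each summand in the $D'^2$-equation to vanish separately, and the inductive hypothesis applied to the $(n-1)$-fold $B_i$ gives $D|_{B_i} \equiv 0$, whence $D \cdot C = (D|_{B_1}) \cdot C = 0$. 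This one-dimension-at-a-time descent is precisely what makes the ``contain $C$'' step routine. For part~(1) the paper simply pulls back to a resolution and cites the Dinh--Sibony inequality; your perturbation reduction to the ample case is a legitimate alternative, but note that it requires the full quadratic Hodge inequality on the surface (not merely the conditional ``if $D\cdot H=0$ then $D^2\le 0$''), since after perturbing the orthogonality hypothesis holds only up to $O(\epsilon)$.
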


\begin{proof}
$H_1 \cdots H_{n-1} \cdot D = 0$ implies the assertion (1) by pulling back to a resolution of $X$
(cf. \cite[Corollary 3.4]{DS}).

We still need to prove the `only if' part of the assertion (2)
which will be done by induction on the dimension $n$.
When $n \le 2$,
the assertion (2) follows from the Hodge index theory (for surfaces).
Suppose that $n \ge 3$ and
$$H_1 \cdots H_{n-1} \cdot D = H_1 \cdots H_{n-2} \cdot D^2 = 0 .$$
Let $\sigma : X' \to X$ be Hironaka's resolution such that $-E$ is relatively ample
for some $\sigma$-exceptional effective divisor $E$.
Replacing $E$ by its small multiple, we may assume that $\sigma^*H_1 - E$ is ample
(cf. \cite[Proposition 1.45]{KM}).

For a curve $C$ on $X$, we take an ample irreducible divisor $A_1$ on $X'$
such that $\sigma(A_1)$ contains $C$.
Take very small $\varepsilon > 0$
such that $\varepsilon A_1 \le \sigma^*H_1 - E$.
Thus we can write $\sigma^*H_1 - E \equiv \sum_{i=1}^s r_i A_i$
where $r_i \in \R_{> 0}$ and $A_i$ are ample irreducible divisors.
Since $E$ is contracted by $\sigma$ and by the projection formula,
we have $\sigma_*E = 0$ and
$$E \cdot \sigma^*D_1 \cdots \sigma^*D_{n-1}
= \sigma_* E \cdot D_1 \cdots D_{n-1} = 0$$
for all Cartier divisors $D_i$.
Set $H_i' := \sigma^* H_i$, $D' := \sigma^* D$.
Then
\begin{equation}\label{D=0eq1}
\sum_{i=1}^s r_i A_i \cdot (\prod_{j=2}^{n-1} H_j') \cdot D' = (H_1' - E) \cdot (\prod_{j=2}^{n-1} H_j') \cdot D'
= (\prod_{j=1}^{n-1} H_j') \cdot D' = (\prod_{j=1}^{n-1} H_j) \cdot D = 0,
\end{equation}
\begin{equation}\label{D=0eq2}
\sum_{i=1}^s r_i A_i \cdot (\prod_{j=2}^{n-2} H_j') \cdot (D')^2 = (H_1' - E) \cdot (\prod_{j=2}^{n-2} H_j') \cdot (D')^2
= (\prod_{j=1}^{n-2} H_j') \cdot (D')^2 = (\prod_{j=1}^{n-2} H_j) \cdot D^2 = 0 .
\end{equation}
By the equality (\ref{D=0eq1}) above and since $A_i$ and $H_j$ are nef, we have (for all $i$):
\begin{equation}\label{D=0eq3}
({H_2'}_{|A_i}) \cdots ({H_{n-1}'}_{|A_i}) \cdot ({D'}_{|A_i}) = A_i \cdot H_2' \cdots H_{n-1}' \cdot D' = 0 .
\end{equation}
Hence $A_i \cdot H_2' \cdots H_{n-2}' \cdot (D')^2 \le 0$ by the assertion (1).
This together with the equality (\ref{D=0eq2}) above imply that for all $i$, we have
\begin{equation}\label{D=0eq4}
({H_2'}_{|A_i}) \cdots ({H_{n-2}'}_{|A_i}) \cdot ({D'}_{|A_i})^2 = A_i \cdot H_2' \cdots H_{n-2}' \cdot (D')^2 = 0 .
\end{equation}
Write $B_i := \sigma(A_i)$ which is birational to $A_i$.
By the equality (\ref{D=0eq3}) above,
$$\prod_{j=2}^{n-1} ({H_j}_{|B_i}) \cdot (D_{|B_i}) =
\prod_{j=2}^{n-1} ((\sigma^*{H_j})_{|A_i}) \cdot ((\sigma^*D)_{|A_i}) = 0 .$$
Similarly, the equality (\ref{D=0eq4}) above implies $\prod_{j=2}^{n-2} ({H_j}_{|B_i}) \cdot (D_{|B_i})^2 = 0$.
By the induction, $D_{|B_i} \equiv 0$. Note that $B_1 = \sigma(A_1)$ contains $C$.
Thus $D \cdot C = (D_{|B_1}) \cdot C = 0$.
The lemma is proved.
\end{proof}

We now prove Theorems \ref{ThA} and \ref{ThC}.

Let $\tau: \widetilde{X} \to X$ be a $G$-equivariant resolution due to Hironaka (cf. \cite[(2.0)]{Fu}
and the reference therein).
Applying the proof of \cite[Theorems 4.7 and 4.3]{DS} to the action of
$G$ on the pullback $\tau^*\Nef(X)$ of the nef cone $\Nef(X)$ (instead of the
K\"ahler cone of $\widetilde{X}$ there), we get
nef $\R$-Cartier divisors $\tau^*L_i$ ($1 \le i \le n$) on $\widetilde{X}$ (resp. $L_i$ on $X$)
as common eigenvectors of
$G$ such that the intersection $L_1 \cdots L_n \ne 0$ and the homomorphism
below is an isomorphism onto a spanning lattice (where we write
$g^*L_i = \chi_i(g)L_i$):
$$\begin{aligned}
\varphi : G  \, &\rightarrow \, (\R^{n-1}, +) \\
g \, &\mapsto \,  (\log \chi_1(g), \dots, \log \chi_{n-1}(g)).
\end{aligned}$$
Since $L_1 \cdots L_n = g^*(L_1 \cdots L_n) = \chi_1(g) \cdots \chi_n(g) \, L_1 \cdots L_n$,
we have
$$\chi_1 \cdots \chi_n = 1 .$$
Set
$$H := \sum_{i=1}^n L_i .$$
Then $H^n \ge L_1 \cdots L_n > 0$ and hence $H$ is a nef and big
$\R$-Cartier divisor.

\begin{lemma}\label{ample}
Under the assumption of Theorem $\ref{ThA}(2)$ or $\ref{ThC}$, the following are equivalent.
\begin{itemize}
\item[(1)]
$H$ is not ample.
\item[(2)]
$H^k \cdot Y = ({H}_{|Y})^k = 0$ for some proper subvariety $Y \subset X$ of dimension $k > 0$.
\item[(3)]
$X$ has a $G$-periodic proper subvariety $Y \subset X$ of dimension $k > 0$.
\end{itemize}
\end{lemma}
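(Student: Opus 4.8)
The plan is to prove $(1)\Leftrightarrow(2)$ by a general positivity argument and then to close the loop with $(3)\Rightarrow(2)$ and $(2)\Rightarrow(3)$. For $(2)\Rightarrow(1)$: an ample class restricts to an ample class, with strictly positive top self-intersection, on every positive-dimensional subvariety, so the existence of a $Y$ as in $(2)$ forbids $H$ from being ample. For $(1)\Rightarrow(2)$ I would invoke the Nakai--Moishezon ampleness criterion for $\R$-Cartier divisors (Campana--Peternell): $H$ is nef and not ample, hence $(H|_{Y})^{\dim Y}=0$ for some positive-dimensional irreducible $Y\subseteq X$; since $H^{n}\ge L_{1}\cdots L_{n}>0$ we cannot have $Y=X$, so $Y$ is proper, which is exactly $(2)$.

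For $(3)\Rightarrow(2)$, let $Y\subsetneq X$ be $G$-periodic of dimension $k$, so $1\le k\le n-1$. Replacing $G$ by a finite-index subgroup that stabilizes $Y$ set-theoretically (harmless: $\varphi$ stays injective with a lattice image spanning $\R^{n-1}$), we have $g_{*}[Y]=[Y]$ for every $g\in G$. Then for each multi-index $I=(i_{1},\dots,i_{k})$ with $1\le i_{j}\le n$, the projection formula and $g^{*}L_{i}=\chi_{i}(g)L_{i}$ give $c_{I}:=(L_{i_{1}}\cdots L_{i_{k}}\cdot Y)=\bigl(\prod_{j}\chi_{i_{j}}(g)\bigr)c_{I}$, so $c_{I}\ge 0$ vanishes unless $\prod_{j}\chi_{i_{j}}\equiv 1$ on $G$. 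Writing $m_{i}=\#\{j:i_{j}=i\}$ (so $\sum_{i=1}^{n}m_{i}=k$) and using $\chi_{1}\cdots\chi_{n}\equiv 1$, that equality forces the linear form $\sum_{i=1}^{n-1}(m_{i}-m_{n})\log\chi_{i}$ to vanish on the full-rank lattice $\varphi(G)$, hence $m_{1}=\dots=m_{n-1}=m_{n}$ and therefore $n\mid k$ --- impossible. So every $c_{I}=0$, and the multinomial expansion of $(H|_{Y})^{k}$ yields $(H|_{Y})^{k}=0$.

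For $(2)\Rightarrow(3)$, take a positive-dimensional proper $Z$ with $(H|_{Z})^{\dim Z}=0$. As the $L_{i}$ are nef, this multinomial is a sum of nonnegative terms, so $(L_{i_{1}}\cdots L_{i_{\dim Z}}\cdot Z)=0$ for every multi-index; the projection formula and $g^{*}L_{i}=\chi_{i}(g)L_{i}$ then give the same with $Z$ replaced by $g(Z)$, and more generally show that the null locus $\operatorname{Null}(H):=\bigcup\{W\subseteq X:\dim W>0,\ (H|_{W})^{\dim W}=0\}$ is $G$-invariant. It is nonempty (it contains $Z$) and does not contain $X$ (because $H^{n}>0$). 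By Nakamaye's theorem for the nef and big divisor $H$ (and its extension to $\R$-divisors), $\operatorname{Null}(H)$ equals the augmented base locus of $H$, hence is Zariski closed with finitely many irreducible components; $G$ permutes these, so each is $G$-periodic, and the component containing $Z$ is the desired positive-dimensional $G$-periodic subvariety.

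The step I expect to be the real obstacle is $(2)\Rightarrow(3)$: upgrading ``$H$ degenerates on \emph{some} positive-dimensional subvariety'' to ``on a $G$-\emph{periodic} one'' genuinely needs closedness of the null locus (Nakamaye), since the $G$-orbit of a bad subvariety need not be Zariski closed on its own. By contrast, the combinatorial heart of the lemma --- the numerical vanishing $c_{I}=0$ in $(3)\Rightarrow(2)$, which is precisely where the maximality hypothesis $\operatorname{rank}\varphi(G)=n-1$ is used --- is elementary once the eigendecomposition $H=\sum L_{i}$ is in place.
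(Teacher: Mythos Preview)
Your proof is correct. The arguments for $(1)\Leftrightarrow(2)$ and $(3)\Rightarrow(2)$ match the paper's: the paper phrases the latter as ``since $\varphi(G)\subset\R^{n-1}$ is a spanning lattice, $k\le n-1$, and $\chi_1\cdots\chi_n=1$, one can choose $g\in G$ with $\chi_{i_j}(g)>1$ for all $j$'', which forces $c_I=0$; your multiplicity count $m_1=\cdots=m_n\Rightarrow n\mid k$ is an equivalent reformulation of the same linear-algebra fact.

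The genuine difference is in $(2)\Rightarrow(3)$. The paper does not invoke Nakamaye's theorem. Instead it writes the nef and big divisor as $H=L+E$ with $L$ ample and $E$ effective (Kodaira's lemma). Any $Y$ with $(H_{|Y})^{\dim Y}=0$ has $H_{|Y}$ not big and hence must lie in $\Supp E$ (otherwise $H_{|Y}\ge L_{|Y}$ would be big). The eigenvector calculation shows the same vanishing holds for every translate $g(Y)$, so $g(Y)\subset\Supp E$ for all $g\in G$, and the Zariski closure of $\bigcup_{g\in G}g(Y)$ is a $G$-stable closed subset contained in $\Supp E$; each of its irreducible components is then $G$-periodic. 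This is strictly more elementary: it needs only the decomposition $H=L+E$, which is available on any normal projective variety. Your route, by contrast, needs Nakamaye's identification $\operatorname{Null}(H)=B_+(H)$ for an $\R$-divisor on a possibly singular (klt or quotient-singular) variety --- a true statement, but a substantially heavier external input whose singular-variety version you should cite explicitly. The payoff of your approach is conceptual: it pins down the obstruction set intrinsically as the augmented base locus, and immediately exhibits the \emph{maximal} $G$-stable locus on which $H$ fails to be ample, rather than just some $G$-periodic piece of it.
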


\begin{proof}
By Campana-Peternell's $\R$-divisor version of
Nakai-Moishezon ampleness criterion, the assertions (1) and (2) are equivalent.

For (3) $\Rightarrow$ (2), assume the assertion (3).
After $G$ is replaced by its finite-index subgroup $G_1$ and noting that
$r(G_1) = r(G) = n-1$,
we may assume that $Y$ is stabilized by $G$.
Note that for all $i_j$, we have
\begin{equation}\label{(*)}
L_{i_1} \cdots L_{i_k} \cdot Y = 0 .
\end{equation}
Indeed,
since $\varphi(G) \subset \R^{n-1}$ is a spanning lattice, $k \le n-1$, and $\chi_1 \cdots \chi_n
= 1$, we can choose $g \in G$ such that
$\chi_{i_j}(g) > 1$ for all $i_j$.
Acting on the left hand side of the equality (\ref{(*)}) (a scalar) with $g^*$ and noting that $g^*Y = Y$,
we conclude the equality (\ref{(*)}). This, in turn, implies that $H^k \cdot Y = 0$.

For (2) $\Rightarrow$ (3), assume that $H^k \cdot Y = 0$ as in the assertion (2).
Then $H_{|Y}$ is nef but not big.
Write $H = L + E$ with $L$ ample and $E$ effective
(cf. the proof of \cite[Proposition 2.61]{KM}).
Since $H_{|Y}$ is not big, $Y \subseteq \Supp E$.
Since $L_{i_j}$ are all nef,
$H^k \cdot Y = 0$ means $L_{i_1} \cdots L_{i_k} \cdot Y = 0$ for all
$i_j$. Since $L_{i_j}$ are
all $g^*$-eigenvectors, reversing the process,
we get $H^k \cdot g(Y) = 0$ and hence $g(Y) \subset \Supp E$
by the above reasoning.
The Zariski-closure $\overline{\cup_{g \in G} \, g(Y)}$
is $G$-stabilized and contained in $\Supp E$. Every irreducible
component of this closure is a positive-dimensional $G$-periodic proper subvariety of $X$.
This proves the assertion (3).
\end{proof}

\begin{setup}
In the proofs below, we will apply the Minimal Model Program to a pair $(X, D)$
of a variety $X$ and an effective $\R$-divisor $D$ where the pair has at worst $klt$ singularities
(cf. \cite[Definition 2.34]{KM}).
If $K_X + D$ is not nef, then there is a $(K_X + D)$-negative extremal ray $R = \R_{> 0}[\ell]$
of the {\it closed cone $\NE(X)$ of effective $1$-cycles}. Now the
cone theorem \cite[Theorem 3.7]{KM} gives rise to an
extremal contraction $\varphi : X \to Y$ to a normal variety $Y$ such that a curve $C$
is contracted by $\varphi$ to a point if and only if the class $[C]$ belongs to the extremal ray $R$.
There are exactly three types of such $\varphi$ (cf.~\cite[\S 3.7]{KM} for details):

\begin{itemize}
\item[(i)]
$\varphi$ is {\it divisorial}. It is a birational morphism whose exceptional locus
$\Exc(\varphi) \subset X$ (the locus where $\varphi$ is not
isomorphic) is a prime divisor.
\item[(ii)]
$\varphi$ is a {\it flip}. Then there is a {\it flipping} $X \dashrightarrow X^+$ which is
a rational map and isomorpohic in codimension one. Further, there is a birational
morphism $X^+ \to Y$ such that the composite $X \dashrightarrow X^+ \to Y$ coincides with $\varphi$.
In particular, there is a natural isomorphism between the Neron-Severi groups (with $\R$-coefficient)
of $X$ and $X^+$. Such $X^+$ is unique. Indeed,
$X^+ = \Proj \oplus_{m \ge 0} \varphi_*\OO_X(\lfloor m(K_X + D)\rfloor)$.
\item[(iii)]
$\dim Y < \dim X$. Then
$\varphi$ is called a {\it Fano fibration} so that the restriction ${-(K_X + D)}_{|F}$ of the anti-adjoint divisor
to a general fibre
$F$ of $\varphi$ is ample.
\end{itemize}
\end{setup}

\begin{lemma}\label{nefbig}
Under the assumption of Theorem $\ref{ThA}(2)$
or Theorem $\ref{ThC}$,
$K_X + sH$ is nef and big for some $($and hence all$)$ $s >> 1$.
\end{lemma}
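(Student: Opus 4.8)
The plan is to argue that $H$ itself is already close enough to ample that adding a large multiple of $H$ to $K_X$ cannot destroy bigness, while nefness follows from a standard argument once we know $K_X+sH$ is not negative on any extremal curve. First I would invoke Lemma \ref{ample}: either $H$ is ample, in which case we are immediately done since $K_X+sH$ is ample (hence nef and big) for $s\gg 1$ by openness of the ample cone; or $X$ carries a positive-dimensional $G$-periodic proper subvariety $Y$. In the latter case I would still like to show $K_X+sH$ is nef and big. Bigness is the easier half: since $H$ is nef and big, $(K_X+sH)^n = s^n H^n + O(s^{n-1})$, and $H^n>0$, so the leading term dominates and $(K_X+sH)^n>0$ for $s\gg 1$; combined with nefness (once established) this gives bigness by the Nakai–Moishezon-type criterion, or directly since a nef divisor with positive top self-intersection is big.

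The real content is nefness. Here I would run the Minimal Model Program on the pair $(X, \Delta)$ for a suitable small effective $\R$-divisor $\Delta$ making the pair $klt$ (using condition (1) of \ref{ThC}, that $X$ has quotient singularities or is a $klt$ threefold — note quotient singularities are $klt$, and one can take $\Delta=0$ if $X$ is already $klt$, or perturb). Suppose $K_X+sH$ were not nef for some large $s$; then there is a $(K_X+sH)$-negative extremal ray $R=\R_{>0}[\ell]$ of $\NE(X)$. Since $H$ is nef, $H\cdot\ell\ge 0$, so for $s\gg 1$ having $(K_X+sH)\cdot\ell<0$ forces $H\cdot\ell=0$ and $K_X\cdot\ell<0$; thus $\ell$ lies in the hyperplane $\{H=0\}$, which by \ref{ample} and its proof means the curve $\ell$ is contained in a positive-dimensional $G$-periodic proper subvariety. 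The extremal contraction $\varphi: X\to Y$ associated to $R$ is then a $G_1$-equivariant birational morphism (after passing to a finite-index subgroup $G_1\le G$ stabilizing the relevant data), and its exceptional locus consists of $G$-periodic uniruled subvarieties as in Definition \ref{min}. But $\varphi$ contracts curves, and the image $Y$ is again normal with $(Y,\varphi_*\Delta)$ $klt$; this contradicts the minimality of the pair $(X,G)$ in the sense of \ref{min}, unless $\varphi$ is an isomorphism — which it is not, since it contracts $\ell$. (If $\varphi$ is a flip or Fano fibration rather than divisorial, one argues similarly: a flip is excluded because $(X,G)$ minimal forbids it after one notes the flipped variety is again $klt$, and a Fano fibration would force $-K_X$ relatively ample along a positive-dimensional fibre, producing $G$-periodic subvarieties violating the hypotheses of \ref{ThC}(3) on rational curves, since fibres of such contractions are rationally chain connected by \cite[Corollary 1.5]{HM}.) Hence no such negative extremal ray exists, and $K_X+sH$ is nef.

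I expect the main obstacle to be the bookkeeping around equivariance: the MMP run on $(X,\Delta)$ produces contractions that are only a priori compatible with a finite-index subgroup of $G$, and one must be careful that passing to such subgroups does not degrade the hypotheses (rank is preserved, $G$-periodicity is preserved), and that the boundary $\Delta$ — which is not required to be $G$-stable in Definition \ref{min} — can indeed be chosen so the relevant pair is $klt$ and the extremal contraction genuinely contradicts minimality. A secondary subtlety is ruling out the Fano-fibration case cleanly using condition \ref{ThC}(3): one needs that a fibre of positive dimension, being rationally chain connected and $G$-periodic, must contain a $G$-periodic rational curve or a $G$-periodic subvariety of the forbidden type, which requires a short argument about $G$-action on the (finitely many) components of the Hilbert scheme parametrizing such curves.
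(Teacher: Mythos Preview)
Your overall strategy---cone theorem, extremal contraction, contradiction---matches the paper's, but two of the three contraction types are not handled correctly.

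\textbf{The flip case.} You write that minimality of $(X,G)$ forbids the flip ``after one notes the flipped variety is again klt.'' But $X \dashrightarrow X^+$ is only a rational map; Definition~\ref{min} concerns $G_1$-equivariant birational \emph{morphisms} $X\to X_1$ with $(X_1,\Delta)$ klt. The morphism here is the small contraction $\varphi_0:X\to Y$, and since $K_Y$ is not $\Q$-Cartier it is not clear that any pair $(Y,\Delta)$ is klt. The paper fixes this by running the MMP with scaling: one introduces an auxiliary ample $M$ with $(X,H+M)$ klt and $K_X+H+M$ nef, finds $0<\lambda_0\le 1$ and an extremal ray $\R_{>0}[\ell_0]$ with $(K_X+H+\lambda_0 M)\cdot\ell_0=0$, and then $K_X+H+\lambda_0 M = \varphi_0^*(K_Y+H_Y+\lambda_0 M_Y)$ forces the target pair to be klt. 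This is what makes the minimality contradiction (and the appeal to \cite{HM} for uniruledness of $\Exc(\varphi_0)$) go through uniformly in the divisorial and flipping cases. The same device also yields the finiteness of the relevant extremal rays---needed so that $G$ permutes them and some $G_1$ stabilizes $\R_{>0}[\ell_0]$---whereas your choice $\Delta=0$ gives only countably many $K_X$-negative rays.

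\textbf{The Fano fibration case.} Your proposal---extract a $G$-periodic rational curve from a rationally chain connected fibre via a Hilbert-scheme argument---does not work: a general fibre is not $G_1$-periodic unless the base point is, and even on a $G_1$-stable fibre there is no mechanism forcing $G_1$ to stabilize an individual rational curve. The paper uses a different argument entirely: if $\dim Y>0$, the $G_1$-equivariant fibration $X\to Y$ with $0<\dim Y<n$ directly contradicts the rank bound of \cite[Lemma~2.10]{Tits}; if $Y$ is a point, then $X$ is Fano of Picard number one, the ample class $-K_X$ is $G$-fixed, and Lieberman--Fujiki forces a finite-index subgroup of $G$ into $\Aut_0(X)$, contradicting positive entropy. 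Finally, note that the lemma must also hold under Theorem~\ref{ThA}(2), which does \emph{not} assume $(X,G)$ minimal; there the birational case is disposed of directly because the $G$-periodic uniruled components of $\Exc(\varphi_0)$ have $\kappa=-\infty$, violating the hypothesis on $Y$---an observation you record but do not invoke.
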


\begin{proof}
Write $H = E/k + A_k$ with $A_k$ a general ample $\Q$-divisor and $E$ an effective $\R$-divisor
(cf. the proof of \cite[Proposition 2.61]{KM}). By the assumption, $X$ is $klt$.
Hence we can choose $k >> 1$ so that $(X, E/k + A_k)$ is
$klt$ (cf.~\cite[Corollary 2.35(2)]{KM}), where $A_k$ is replaced by $(\sum_{i=1}^m D_i)/m$ with
$D_i$ general members of $|m A_k|$ for some $m >> 1$.
Replace $H$ by $E/k + A_k$ for some large $k$ and fix an ample $\Q$-divisor $M$,
such that $K_X + H + M$ is nef and $klt$.

We may assume that $K_X + sH$ is not nef for any $s > 0$.
We now consider $K_X + H$, but $H$ may be replaced by $sH$ for some $s >> 1$.
By the cone theorem
(cf.~\cite[Theorem 3.7]{KM} or \cite[Corollary 3.8.2]{BCHM}),
there are only finitely many $(K_X + H)$-negative extremal rays $\R_{> 0} [\ell]$ in $\NE(X)$.
Replacing $H$ by a larger multiple, we may assume that all these $\ell$ satisfy $H \cdot \ell = 0$
(i.e., $L_i \cdot \ell = 0$ for all $i$) and $K_X \cdot \ell < 0$.
Since $L_i \cdot g^{-1}(\ell) = \chi_i(g) L_i \cdot \ell = 0$ and hence $H \cdot g^{-1}(\ell) = 0$, and
$K_X \cdot g^{-1}(\ell) = g^*K_X \cdot \ell = K_X \cdot \ell < 0$,
$g^{-1}(\ell)$ (also an extremal curve) satisfies the same conditions as $\ell$. So
these finitely many extremal rays $\R_{> 0} [\ell]$ are permuted,
and hence stabilized by a finite-index subgroup $G_1$ of $G$.
This $G_1$ will be used later on.

By \cite[Theorem 1.1(6)]{Fujino}
(which extends the result of Birkar),
there are some $1 \ge \lambda_0 > 0$
and extremal ray $\R_{> 0} [\ell_0]$ such that
$K_X + H + \lambda_0 M$ is nef,
$(K_X + H) \cdot \ell_0 < 0$ (and hence $H \cdot \ell_0 = 0$ and $K_X \cdot \ell_0 < 0$) and $(K_X + H + \lambda_0 M) \cdot \ell_0 = 0$.

Let $\varphi_{0} : X \to Y$ be the extremal contraction corresponding to the extremal ray $\R_{> 0} [\ell_0]$,
which is $G_1$-equivariant, where $G_1$ is as mentioned earlier on.
Thus every positive-dimensional irreducible component $F_i$
of $\Exc(\varphi_0)$ is $G_1$-periodic and hence $G$-periodic.

Suppose that $\varphi_{0}$ is a Fano fibration.
If $Y$ is not a point, this contradicts \cite[Lemma 2.10]{Tits}
since $r(G_1) = r(G) = \dim X - 1$ now.
If $Y$ is a point, then $X$ is Fano of Picard number one and the class of $-K_X$ is ample
and preserved by $G$. Thus a finite-index subgroup of $G$ is contained in
$\Aut_0(X)$ by the result of Lieberman and Fujiki
(cf. \cite[Proposition 2.2]{Li}, \cite[Theorem 4.8]{Fu}).
Hence $G$ is of null entropy, a contradiction.

Suppose that $\varphi_{0}$ is birational (i.e., divisorial or a flip).
Let $H_Y$ and $M_Y$ be the direct image on $Y$ of $H$
and $M$, respectively. Since $K_X + H + \lambda_0 M$ is perpendicular to $\ell_0$,
it is the pullback of $K_Y + H_Y + \lambda_0 M_Y$
(cf. \cite[Theorem 3.7(4)]{KM} or \cite[Corollary 3.9.1]{BCHM}), so the latter adjoint divisor on $Y$
(or the pair $(Y, H_Y + \lambda_0 M_Y)$)
is $klt$ because so is its birational pullback on $X$. Hence
for every irreducible component $F_i$ of $\Exc(\varphi_0)$,
the fibres of
${\varphi_0}_{| F_i} : F_i \to \varphi_0(F_i)$
are all rationally chain connected by \cite[Corollary 1.5]{HM}.
Thus $F_i$ are all $G$-periodic and uniruled
(and hence of Kodaira dimension $-\infty$).
This contradicts the assumption of Theorem \ref{ThA} (2), and Theorem \ref{ThC} (2) as well.

Thus $K_X + sH$ is nef for some $s > 0$ and the lemma follows since
$H$ is big (and nef).
\end{proof}

\begin{lemma}\label{ample2}
Under the assumption of Theorem $\ref{ThA}(2)$
or Theorem $\ref{ThC}$, $K_X + sH$ is ample for some $($and hence all$)$ $s >> 1$.
Moreover, $H_{|Y} \equiv 0$ $($numerically$)$ for every $G$-periodic subvariety $Y \subset X$,
and hence $(K_X)_{|Y}$ is ample.
\end{lemma}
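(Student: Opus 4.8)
The plan is to establish the three assertions in the order given --- first the ampleness of $K_X+sH$, then (via Lemma~\ref{D=0}) the vanishing of $H_{|Y}$, and finally the ampleness of $(K_X)_{|Y}$. For the ampleness of $K_X+sH$ I would argue as follows. By Lemma~\ref{nefbig} fix $s_1\gg 1$ with $K_X+s_1H$ nef and big. Since $sH$ is nef and big (recall $H^n\ge L_1\cdots L_n>0$) and $X$ is $klt$, the base-point-free theorem (cf.~\cite[Theorem 3.3]{KM}) shows that $K_X+sH$ is semiample for every $s\ge s_1$, hence defines a birational morphism $\psi_s\colon X\to X_s$ onto a normal projective variety with $K_X+sH=\psi_s^{*}A_s$ for some ample divisor $A_s$ on $X_s$. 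For $s>s_1$, writing $K_X+sH=(K_X+s_1H)+(s-s_1)H$ as a sum of two nef divisors, every curve $C$ contracted by $\psi_s$ satisfies $(K_X+s_1H)\cdot C=0$ and $H\cdot C=0$, hence $L_i\cdot C=0$ for all $i$ and $K_X\cdot C=0$; thus $\psi_s$ is a crepant contraction (so $X_s$ is again $klt$), and the face of $\NE(X)$ it contracts, namely $\{z : L_i\cdot z=0\ (1\le i\le n),\ K_X\cdot z=0\}$, is preserved by $G$ because $g^{*}L_i=\chi_i(g)L_i$ and $g^{*}K_X=K_X$; consequently $\psi_s$ is $G$-equivariant. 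If $\psi_s$ were not an isomorphism, then under the hypotheses of Theorem~\ref{ThC} the minimality of $(X,G)$ is contradicted, while under those of Theorem~\ref{ThA}(2) every irreducible component of $\Exc(\psi_s)$ is positive-dimensional, $G$-periodic, and --- by \cite[Corollary 1.5]{HM} applied to the crepant birational morphism $\psi_s$ --- uniruled, hence of Kodaira dimension $-\infty$, again a contradiction. Therefore $\psi_s$ is an isomorphism, i.e.\ $K_X+sH$ is ample, for all $s\gg 1$.

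Next, let $Y\subset X$ be a $G$-periodic proper subvariety of dimension $k$; after replacing $G$ by a finite-index subgroup (which preserves the rank $n-1$) I may assume $Y$ is $G$-stable. By the argument used for equality (\ref{(*)}) in the proof of Lemma~\ref{ample}, extended using $g^{*}K_X=K_X$, one gets $K_X^{a}\cdot L_{i_1}\cdots L_{i_{k-a}}\cdot Y=0$ for every choice of indices whenever $k-a\ge 1$. Expanding $K_X+sH=K_X+s\sum_i L_i$, every monomial occurring in $(K_X+sH)^{k-1}\cdot L_i\cdot Y$ and in $(K_X+sH)^{k-2}\cdot L_i^{2}\cdot Y$ carries at least one factor $L_j$ and at most $k-1$ factors $K_X$, so both intersection numbers vanish. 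Since $(K_X+sH)_{|Y}$ is an ample $\R$-Cartier divisor on $Y$, I would then apply Lemma~\ref{D=0} on the variety $Y$ with $H_1=\cdots=H_{k-1}=(K_X+sH)_{|Y}$ and $D=(L_i)_{|Y}$ to conclude $(L_i)_{|Y}\equiv 0$ for each $i$ (the case $k=1$ being immediate, since there $L_i\cdot Y=0$ already), whence $H_{|Y}=\sum_i (L_i)_{|Y}\equiv 0$. Finally, $H_{|Y}\equiv 0$ gives $(K_X)_{|Y}\equiv (K_X+sH)_{|Y}$, which is ample; since ampleness is a numerical condition, $(K_X)_{|Y}$ is ample.

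The step I expect to be the main obstacle is the first: one has to arrange that, for $s$ large, the semiample divisor $K_X+sH$ contracts only curves that are simultaneously $H$-trivial and $K_X$-trivial, so that the contraction $\psi_s$ is at the same time crepant --- keeping $X_s$ in the $klt$ category, which is needed to apply \cite[Corollary 1.5]{HM} to the birational morphism $\psi_s$ --- and $G$-equivariant, so that the minimality of $(X,G)$ (under Theorem~\ref{ThC}), respectively the hypothesis on $G$-periodic subvarieties (under Theorem~\ref{ThA}(2)), can be invoked. Once $K_X+sH$ is known to be ample, the two remaining assertions are routine consequences of Lemma~\ref{D=0}.
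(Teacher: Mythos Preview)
Your overall strategy matches the paper's: obtain a birational contraction from the nef and big divisor $K_X+sH$, show it is $G$-equivariant with $H$- and $K_X$-trivial fibres, and derive a contradiction via \cite[Corollary 1.5]{HM}; then use Lemma~\ref{D=0} for the second assertion. Your device of comparing two values $s>s_1$ to force $H\cdot C=K_X\cdot C=0$ on contracted curves, and then reading off $G$-equivariance from the $G$-invariance of the face $\{L_i\cdot z=K_X\cdot z=0\}$, is a clean variant of the paper's argument (which instead enumerates finitely many $(K_X+E/k)$-negative extremal rays and passes to a finite-index subgroup). For the second part you work with each $L_i$ separately rather than with $H$; this is equivalent and in fact avoids the nefness step the paper uses to pin down $((K_X+sH)_{|Y})^{k-2}\cdot(H_{|Y})^2=0$.

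Two technical points to tighten. First, $H=\sum L_i$ is a genuine $\R$-divisor, so \cite[Theorem 3.3]{KM} (stated for Cartier $D$) does not apply; the paper instead writes $H=E/k+A_k$ as in the proof of Lemma~\ref{nefbig}, so that $(X,sH)$ is a klt pair, and then invokes the $\R$-divisor base-point-freeness/log canonical model from \cite[Theorem 3.9.1]{BCHM} to produce $\psi$. Second, your claim ``$\psi_s$ is crepant, so $X_s$ is klt'' is not safe if $\psi_s$ is small, since $K_{X_s}$ need not be $\Q$-Cartier; what you actually need (and what the paper proves) is that the \emph{pair} $(X_s, sH_{X_s})$ is klt, which follows because $K_X+sH=\psi_s^{*}(K_{X_s}+sH_{X_s})$ and the left side is klt. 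This pair statement is exactly the hypothesis required to invoke \cite[Corollary 1.5]{HM} and hence the minimality of $(X,G)$ in Theorem~\ref{ThC}, or the uniruledness contradiction in Theorem~\ref{ThA}(2).
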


\begin{proof}
By Lemma \ref{nefbig}, we may assume that $K_X + H$ is nef and big and $klt$
after replacing $H$ by its large multiple.
By the effective base-point freeness theorem
(cf. \cite[Theorem 3.9.1]{BCHM}),
there is a birational morphism $\psi : X \to Z$ onto a normal projective variety $Z$,
such that $K_X + H = \psi^*P$ for some ample divisor $P \subset Z$.
Write $H = E/k + A_k$ as in Lemma \ref{nefbig}. Thus every extremal ray
$\R_{\ge 0}[\ell] \subset \NE(X)$ contracted by $\psi$ is $(K_X + E/k)$-negative.
By the cone theorem, there are only finitely many such extremal rays.
Replacing $H$ by its large multiple, we may assume that such $\ell$ satisfies
$\ell \cdot H = 0 = \ell \cdot K_X$, the condition of which is preserved by $G$.
Thus we may assume that all such extremal rays are stabilized (resp. permuted)
by a finite-index subgroup $G_1$ of $G$ (resp. by $G$). In particular,
$\psi$ is $G_1$-equivariant;
$\Exc(\psi)$ and every positive-dimensional irreducible component $F_i$ of it
is $G$-periodic. Let $H_Z \subset Z$ be the direct image of $H$.
Then $K_X + H$ is the pullback of $K_Z + H_Z$ ($\sim_{\Q} P$)
and hence the latter adjoint divisor on $Z$ (or the pair $(Z, H_Z)$)
is $klt$ because so is its pullback on $X$.
As in Lemma \ref{nefbig}, each $F_i$ is $G$-periodic and uniruled, contradicting
the assumption of Theorem \ref{ThA} (2), and Theorem \ref{ThC} (2) as well,
unless $\psi : X \to Z$ is an isomorphism, i.e., $K_X + H$ is ample.

For the second assertion, we claim the following vanishing of the intersection:
\begin{equation}\label{(*2)}
((K_X + sH)_{|Y})^{k-1} \cdot H_{|Y} = (K_X + sH)^{k-1} \cdot H \cdot Y = 0
\end{equation}
where $k = \dim Y$.
Indeed, since $H = \sum_{i=1}^n L_i$, the above intersection number is
the summation of the following terms
$$
K_X^{k-1-t} \cdot L_{j_1}  \cdots L_{j_t} \cdot L_i \cdot Y
$$
where $0 \le t \le k-1 \le n-2$.
Now the vanishing of each term above can be verified as
in Lemma \ref{ample}, since $g^*K_X \sim K_X$ for $g \in G$.
The equality (\ref{(*2)}) above is proved.

The equality (\ref{(*2)}) and ampleness of $K_X + sH$ imply that
the scalar
$$((K_X + sH)_{|Y})^{k-2} \cdot (H_{|Y})^2$$
is non-positive
by Lemma \ref{D=0},
and hence is zero since $K_X + sH$ and $H$ are nef.
Thus $H_{|Y} \equiv 0$ by Lemma \ref{D=0}.
\end{proof}

\begin{lemma}\label{ample3}
Under the assumption of Theorem $\ref{ThA}(2)$, $H$ is ample.
\end{lemma}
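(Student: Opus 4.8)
The plan is to reduce the ampleness of $H$, via Lemma~\ref{ample}, to the statement that $X$ has no positive-dimensional $G$-periodic proper subvariety, and then to show that any such subvariety would have to lie in one of the three classes excluded by the hypothesis of Theorem~\ref{ThA}(2). The leverage comes from Lemma~\ref{ample2}, which turns $(K_X)_{|Y}$ into an ample $G$-invariant class on such a $Y$, so that the $G$-action on $Y$ is severely restricted.

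Suppose $H$ is not ample. By Lemma~\ref{ample}, $X$ carries a positive-dimensional $G$-periodic proper subvariety; I would pick one, $Y$, of \emph{minimal} dimension. Replacing $G$ by a finite-index subgroup (which changes neither $r(G) = n-1$ nor the family of $G$-periodic subvarieties), we may assume $Y$ is $G$-stable, so that $G$ acts on $Y$; by minimality $Y$ contains no positive-dimensional $G$-periodic proper subvariety. By Lemma~\ref{ample2}, $H_{|Y} \equiv 0$ and $(K_X)_{|Y}$ is ample; and since $g^* K_X \sim K_X$ for every $g \in G$, this ample class (or an integral multiple of it) is $G$-invariant. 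Thus $G$ acts on the (possibly singular) projective variety $Y$ preserving an ample class.

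Next I would invoke the Lieberman--Fujiki result (cf.~\cite[Proposition 2.2]{Li}, \cite[Theorem 4.8]{Fu}): the subgroup of $\Aut(Y)$ fixing a given ample class has only finitely many connected components, so, $G \cong \Z^{\oplus n-1}$ being finitely generated, a finite-index subgroup $G_1 \le G$ acts on $Y$ through $\Aut_0(Y)$. Then I would analyze the connected algebraic group $\Aut_0(Y)$ via Chevalley's structure theorem; let $L \le \Aut_0(Y)$ be its maximal connected normal affine subgroup, with $\Aut_0(Y)/L$ an abelian variety. If $\Aut_0(Y) = \{\id\}$, then $G_1$ fixes $Y$ pointwise, contradicting the first exclusion. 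If $L \neq \{\id\}$, then the nontrivial connected linear algebraic group $L$ — which contains a copy of $\BGG_a$ or $\BGG_m$ — acts faithfully on $Y$, so its orbits sweep out rational curves through a general point, whence $Y$ is uniruled and $\kappa(Y) = -\infty$ (reading this on a $G$-equivariant resolution if $Y$ is singular), contradicting the third exclusion. Otherwise $L = \{\id\}$ and $\Aut_0(Y) =: A'$ is a nontrivial abelian variety; since $A'$ is complete, the orbit $\mathcal{O} = A' \cdot y$ of a general point $y \in Y$ is closed, positive-dimensional and $A'$-invariant, hence $G_1$-invariant and therefore $G$-periodic. If $\mathcal{O} \subsetneq Y$ this contradicts the minimality of $Y$; if $\mathcal{O} = Y$, then $Y$ is homogeneous under $A'$, hence smooth, and $Y \cong A'/\mathrm{Stab}(y)$ is an abelian variety, so $Y$ is a $Q$-torus with $q(Y) = \dim Y > 0$, contradicting the second exclusion. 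Since every case is impossible, $X$ has no positive-dimensional $G$-periodic proper subvariety, and $H$ is ample.

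The step I expect to be the main obstacle is the treatment of a singular $Y$: one has to be sure that the finiteness of the polarized automorphism group and the reduction to $\Aut_0(Y)$ survive without smoothness (using the representability and boundedness of the polarized automorphism scheme), that uniruledness and the value $\kappa(Y) = -\infty$ are read off a resolution, and that the repeated passage to finite-index subgroups of $G$ disturbs neither the minimality of $Y$ nor the identification of $G_1$-invariant subvarieties with $G$-periodic ones.
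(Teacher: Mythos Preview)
Your argument is correct and is genuinely more streamlined than the paper's. Both proofs begin identically: pick a $G$-periodic $Y \subset X$ of minimal positive dimension, use Lemma~\ref{ample2} to get an ample $G$-invariant class $(K_X)_{|Y}$, and invoke Lieberman--Fujiki to land a finite-index subgroup $G_1$ in $\Aut_0$. From there the paper passes to a $G_1$-equivariant resolution $Y'$, rules out $\kappa(Y') \ge 1$ via the Iitaka fibration and the Deligne--Nakamura--Ueno finiteness theorem, excludes $\kappa(Y') = -\infty$ by hypothesis, and for $\kappa(Y') = 0$ runs a rather elaborate analysis: Matsumura's ruledness criterion, the Jacobi homomorphism, a reduction to the normalization $Y^n$ with $K_{Y^n} \sim_{\Q} 0$, a Beauville-type splitting $F \times A \to Y^n$, and finally a fixed-point argument on $\Sing Y$ to force $Y$ itself to be a $Q$-torus. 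You instead stay on $Y$, apply Chevalley to $\Aut_0(Y)$, and use the orbit $A' \cdot y$ directly: minimality forces $Y$ to be a single $A'$-orbit, hence (since $A'$ is abelian and acts faithfully) $Y \cong A'$ is an abelian variety. This bypasses the Kodaira-dimension trichotomy and the Beauville decomposition entirely; what it buys is brevity, at the cost you correctly identify --- the Lieberman--Fujiki step and the structure of $\Aut_0(Y)$ must be justified for singular $Y$ via the representability and finite-type property of the polarized automorphism scheme, whereas the paper's detour through $Y'$ keeps everything on smooth ground. Both approaches use Matsumura's result (you phrase it as ``$L$ contains $\BGG_a$ or $\BGG_m$, hence $Y$ is uniruled''), and both ultimately exploit the minimality of $\dim Y$, but your use of it --- forcing an abelian-variety orbit to fill $Y$ --- is more direct than the paper's use via the torus-quotient covering family.
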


\begin{proof}
Suppose the contrary that $H$ is not ample. Then by Lemma \ref{ample},
a subvariety $Y \subset X$ of positive-dimension $k$ is
$G$-periodic. We choose such $Y$ with
$k \in \{1, \dots, n-1\}$ minimal.
This $Y$ is stabilized by a finite-index subgroup $G_1$ of $G$.
Now the class of the ample divisor $(K_X)_{|Y}$ (cf.~Lemma \ref{ample2})
is fixed by the pullback of every $g \in G_1$.
Let $\tau: Y' \to Y$ be a $G_1$-equivariant desingularization.
Then $\tau^*((K_X)_{|Y})$ is nef and big on $Y'$ and its class is fixed by the pullback of every
$g \in G_1$. Thus ${G_1}_{|Y'} \le \Aut_0(Y')$ after
$G_1$ is replaced by a smaller finite-index subgroup of $G$ by the result of Lieberman and Fujiki
(cf.~\cite[Lemma 2.23]{JDG}, \cite[Proposition 2.2]{Li}, \cite[Theorem 4.8]{Fu}).

Suppose that the Kodaira dimension $(\kappa(Y) :=$) $\kappa(Y') \ge 1$.
Then $G$ (replaced by its finite-index subgroup)
acts trivially on the base of the
Iitaka fibration $Y' \dashrightarrow B$ (with $\kappa(Y') = \dim B$), by a classical
result of Deligne-Nakamura-Ueno \cite[Theorem 14.10]{Ue}.
Hence $G$ stabilizes a general fibre $Y'_b$ over a point $b \in B$.
By the minimality of $\dim Y = k > 0$, we have $\dim Y'_b = 0$ and hence $Y'$ is of general type
so that $\Aut(Y')$ (and hence $G_{|Y'}$)
are known to be finite; thus $Y$ is fixed (point wise) by
a subgroup of $G_{|Y} \le \Aut(Y)$, contradicting the assumption
of Theorem \ref{ThA}(2).

Therefore, we may assume that $\kappa(Y) \le 0$.
Thus $\kappa(Y) = 0$ by the assumption in Theorem \ref{ThA}(2),
which is stronger than the condition (3) in Theorem \ref{ThC}.

For $\Aut_0(Y')$, we shall use the terminology and results
in \cite[Theorem 3.12]{Li} or \cite[\S 2, Theorem 5.5]{Fu}.
If $\Aut_0(Y')$ has a positive-dimensional linear part, then
$Y$ is ruled (which contradicts $\kappa(Y) = 0$) by a classical result of Matsumura
or its generalization \cite[Proposition 5.10]{Fu};
thus we may assume that the linear part of $\Aut_0(Y')$ is trivial;
then the classical
Jacobi homomorphism $\Aut_0(Y') \to \Aut_0(\Alb(Y')) \cong \Alb(Y')$
has a finite kernel (cf. \cite[Theorem 3.12]{Li}, \cite[Theorem 5.5]{Fu}).
If ($\dim \Alb(Y') =$) $q(Y') = 0$, then ${G_1}_{|Y'} \le \Aut_0(Y') = (1)$
and $Y$ is fixed (point wise)
by $G_1$, contradicting the assumption of Theorem \ref{ThA}(2).

Thus we may assume that $q(Y') > 0$.
The singular locus $\Sing Y$ is clearly stabilized by $G_1$. By the minimality of $\dim Y = k > 0$,
$\Sing Y$ is empty or finite.
Since $\kappa(Y) = 0$, $K_{Y'} \sim_{\Q} D$ for some
effective $\Q$-divisor $D$. Since $g^*D \sim_{\Q} g^*K_{Y'} \sim K_{Y'} \sim_{\Q} D$
for $g \in G_1$, we have $g^*D = D$ for $\kappa(Y') = 0$.
Thus all irreducible components of
$D \subset Y'$ and their images on $Y$ are $G_1$-periodic.
The minimality of $\dim Y = k > 0$ implies that $D$ is contracted to a few points on $Y$
and also on $Y^n$, if we factor $Y' \to Y$ as $Y' \to Y^n \to Y$
with $Y^n \to Y$ the normalization.
Thus $K_{Y^n} \sim_{\Q} 0$, since it is the direct image of $K_{Y'}$ ($\sim_{\Q} D$).
Further, $Y^n$ has at worst canonical singularities since $K_{Y'}$
is the pullback of $K_{Y^n}$ plus an effective divisor $D$.

By the proof of \cite[Theorem B]{NZ} (cf. also \cite[\S 3, especially Proposition 3]{Be}),
there is a finite \'etale
morphism $F \times A \to Y^n$ such that $A$ is an abelian variety,
$F$ is a weak Calabi-Yau variety (and hence $q(F) = 0$) and
$$M := \Aut_0(Y^n)$$
lifts to a {\it split} action of
$\widetilde{M}$, with $\widetilde{M}/(\Gal((F \times A)/Y^n)) \cong M$,
on the product $F \times A$. Since $q(F) = 0$
and hence $\Aut_0(F) = (1)$ (cf. \cite[Lemma 4.4]{NZ}),
the connected algebraic group $\widetilde{M}$ acts on the factor $F$ trivially.
Thus $\widetilde{M}$ stabilizes all fibres $\{f\} \times A$, and hence ${G_1}_{|Y^n}$
and ${G_1}_{|Y}$ stabilize their images (i.e., quotients of tori $\{f\} \times A$),
which form a so called torus-quotient covering family of $Y^n$.

By the minimality of $\dim Y = k > 0$,
we have $\dim Y = \dim A$ ($\ge q(Y') > 0$), i.e., $\dim F = 0$.
Thus $Y^n$ is a $Q$-torus (by taking the Galois closure of $A \to Y^n$).
In particular, $Y^n$ is smooth and we may take $Y' = Y^n$.

By the assumption in Theorem \ref{ThA}(2), $Y$ is not a $Q$-torus with $q(Y) > 0$.
Hence $Y^n \ne Y$. Namely, $\Sing Y$ is a non-empty finite set.
Its inverse on $Y^n$ is stabilized
by ${G_1}_{|Y'}$. Further, the image on $\Alb(Y^n)$ of this (finite) inverse is stabilized
by the image of ${G_1}_{| Y^n}$ in $\Aut_0(\Alb(Y^n))$ ($=$ translations)
under the above Jacobi homomorphism. Hence such homomorphic image is trivial.
So ${G_1}_{| Y^n}$ is a finite group because the Jacobi homomorphism
has a finite kernel as mentioned earlier on. Thus $Y^n$ and hence $Y$ are fixed (point wise)
by a finite-index subgroup of $G$.
This contradicts the assumption in Theorem \ref{ThA}(2).
\end{proof}

\begin{lemma}\label{ample4}
Under the assumption of Theorem $\ref{ThC}$, $H$ is ample.
\end{lemma}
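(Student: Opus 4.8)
The plan is to run the proof of Lemma \ref{ample3} as far as it applies, and to add a treatment of the only case it does not cover under the weaker hypothesis of Theorem \ref{ThC}, namely a $G$-periodic subvariety $Y$ with $\kappa(Y) = -\infty$ that is not a rational curve. So suppose $H$ is not ample. By Lemma \ref{ample} there is a positive-dimensional $G$-periodic proper subvariety of $X$; I pick one, $Y$, of minimal dimension $k$. Then $Y$ contains \emph{no} positive-dimensional $G$-periodic proper subvariety (one would again violate ampleness of $H$ by Lemma \ref{ample}, contradicting minimality of $k$). By Lemma \ref{ample2}, $H_{|Y} \equiv 0$ and $(K_X)_{|Y}$ is ample. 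After replacing $G$ by a finite-index subgroup $G_1$ stabilizing $Y$ and passing to a $G_1$-equivariant resolution $\tau : Y' \to Y$ (with $Y' \to Y^n \to Y$ the normalization), the class of the nef and big divisor $\tau^*((K_X)_{|Y})$ is $G_1$-fixed, so the Lieberman--Fujiki argument used in Lemma \ref{ample3} gives ${G_1}_{|Y'} \le \Aut_0(Y')$ and, similarly, ${G_1}_{|Y^n} \le \Aut_0(Y^n)$.

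For $\kappa(Y') \ge 1$ and $\kappa(Y') = 0$ I would copy Lemma \ref{ample3} word for word: in the first case the Iitaka fibration forces $Y'$ to be of general type and $Y$ to be fixed point wise by a finite-index subgroup of $G$; in the second the decomposition of \cite[Theorem B]{NZ} (for which Theorem \ref{ThC}(5) supplies the good minimal model in the range $\dim Y \le n-2$) together with minimality of $k$ forces $Y^n$ to be a $Q$-torus, hence either $Y$ is a $Q$-torus with $q(Y) > 0$ or $\Sing Y$ is finite and non-empty and $Y$ is fixed point wise by a finite-index subgroup of $G$. Every outcome contradicts Theorem \ref{ThC}(3).

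The new work is the case $\kappa(Y') = -\infty$. First I would show $Y'$ is uniruled: if $q(Y') > 0$ this is Theorem \ref{ThC}(4); if $q(Y') = 0$, then $\Aut_0(Y') \ge {G_1}_{|Y'}$ is positive-dimensional with trivial Albanese, hence linear, and $Y'$ is uniruled by Matsumura's theorem (cf.~\cite[Proposition 5.10]{Fu}). If $k = 1$ then $Y$ is a rational curve, against Theorem \ref{ThC}(3), so $k \ge 2$. Let $\pi : Y' \to Z$ be the maximal rationally connected fibration (a morphism after a further blow-up, and $G_1$-equivariant by functoriality); then $Z$ is not uniruled, $m := \dim Z < k$, and the general fibre of $\pi$ is rationally connected of dimension $k-m \ge 1$. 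Using the absence of positive-dimensional $G$-periodic proper subvarieties of $Y$ --- applied to $\pi$-preimages of orbit closures of $\overline{{G_1}_{|Z}}$ in $Z$ and to general fibres of $\pi$, and carried down to $X$ via $Y^n$ (on which the induced map to $Z$ becomes a morphism once $Z$ is known to be an abelian variety) so that dimensions are preserved --- I would conclude that $\overline{{G_1}_{|Z}}$ acts transitively on $Z$; being a complete homogeneous space for a commutative algebraic group, $Z$ is then an abelian variety, possibly a point. If $m = 0$, then $Y^n$ is rationally connected and almost homogeneous under the commutative group $\overline{{G_1}_{|Y^n}}$ with finite boundary, so its dense orbit is an affine variety of dimension $k \ge 2$, contradicting Hartogs' theorem ($\OO(Y^n) = \C$). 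If $m \ge 1$, I would write $Y^n \cong \overline{{G_1}_{|Y^n}} \times^K F_0$, where $K$ is the stabilizer of the fibre $F_0$ over $0 \in Z$; since $K$ is commutative (its abelian part acts trivially on the rationally connected $F_0$, and a single orbit of a commutative linear group is affine, so cannot equal the positive-dimensional projective $F_0$), $K$ does not act transitively on $F_0$, so there is a closed $K$-orbit $O \subsetneq F_0$, and $\overline{{G_1}_{|Y^n}} \times^K O$ then descends to a positive-dimensional $G$-periodic proper subvariety of $Y$ --- contradicting the minimality of $k$. Thus $H$ is ample.

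The hard part is this last case: extracting from a uniruled $G$-periodic $Y$ of minimal dimension a strictly smaller $G$-periodic subvariety (or a rational curve). The key technical points will be the reduction to the maximal rationally connected fibration over an abelian base, the use of commutativity of $\overline{G_1}$ inside $\Aut_0$ (to forbid transitive actions on positive-dimensional rationally connected fibres), and the careful bookkeeping of dimensions under $Y^n \to Y$. Conditions (3), (4), (5) of Theorem \ref{ThC} enter exactly here --- (4) for uniruledness, (3) for the residual rational curve, (5) to make the $\kappa = 0$ step unconditional --- which is why they can replace the blanket exclusion of $\kappa(Y) = -\infty$ made in Theorem \ref{ThA}(2).
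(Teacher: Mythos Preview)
Your outline follows the paper's strategy---reduce to $\kappa(Y')=-\infty$, pass to an MRC-type fibration over a base $Z$, then extract a contradiction from the fibre structure---and your endgame (a commutative group cannot act transitively on a positive-dimensional rationally connected fibre) matches the paper's. Your Hartogs shortcut for the $q(Y')=0$ case is a legitimate alternative to the paper's Huckleberry--Oeljeklaus cone argument, once $k\ge 2$. But the heart of your $\kappa=-\infty$ case has a genuine gap: you never establish a \emph{regular} action of $G_1$ (or of $\overline{G_1}$) on a fixed projective model of $Z$. ``$G_1$-equivariant by functoriality'' for the ordinary MRC gives only a birational action, and without regularity you cannot speak of orbit closures, transitivity, or conclude that $Z$ is an abelian variety. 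The paper does not shortcut here: it invokes the \emph{special} MRC of \cite[Theorem~4.18, Corollary~4.20]{IntS}, which produces a specific $Z$ carrying a regular $G_1$-action and a $G_1$-invariant ample class $H_Z$; Lieberman--Fujiki then places ${G_1}_{|Z'}\le\Aut_0(Z')$. From there the paper uses condition~(4) to get $\kappa(Z')\ge 0$, minimality of $k$ to force $\kappa(Z')=0$, and condition~(5)---applied to $Z'$, where $\dim Z'\le n-2$---to obtain a good minimal model $Z_m$ with $K_{Z_m}\sim_{\Q}0$, after which the Beauville-type decomposition identifies $Z_m$ as a $Q$-torus.

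Even granting that, your fibre-bundle description $Y^n\cong\overline{G_1}\times^K F_0$ presupposes that $Y^n$ is genuinely $\overline{G_1}$-homogeneous and fibres equi-dimensionally over $Z$; you only have almost-homogeneity with finite complement. The paper spends real effort here (Claim~\ref{c2} and what follows): it shows $Z=Z'=Z_m$, that the graph $W$ coincides with $Y$, that $Y\to Z$ is a smooth morphism, and finally that the complement $\Sigma$ of the open $\overline{G_1}$-orbit is empty (not just finite) by noting that the fibres of $Y\to Z$ through $\Sigma$ would otherwise be $G$-periodic. Your attempt to bypass condition~(5) by arguing directly that $Z$ is abelian is attractive, but it rests precisely on the missing regularity; the paper's route through \cite{IntS} and condition~(5) is what makes these steps rigorous. (A minor point: your parenthetical invoking~(5) in the $\kappa=0$ case is misplaced---that case runs Lemma~\ref{ample3} verbatim with no minimal-model hypothesis on $Y$, and $\dim Y$ may be $n-1$, outside the range of~(5). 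Condition~(5) enters only for the base $Z'$.)
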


\begin{proof}
We use the argument and notation of Lemma \ref{ample3}.
The condition (3) in Theorem \ref{ThC} and the argument
in Lemma \ref{ample3} imply that $\kappa(Y) := \kappa(Y') = -\infty$.
We may assume that ${G_1}_{|Y'}$ is an infinite group,
otherwise, $Y'$ and hence $Y$ are fixed (point wise) by a finite-index subgroup of $G$,
contradicting the condition (3) in Theorem \ref{ThC}.

\begin{claim}\label{c1}
The irregularity $q(Y') > 0$.
\end{claim}

We prove the claim.
Suppose the contrary that $q(Y') = 0$. Then
$\Aut_0(Y')$ is an affine algebraic group. Let $\overline{G}_1 \le \Aut_0(Y')$
be the identity connected component of the closure of ${G_1}_{|Y'} \le \Aut_0(Y')$ in the Zariski topology
which is abelian because so is $G_1$.
By the minimality of $\dim Y = k > 0$,
the induced action of the affine algebraic group $\overline{G}_1$ on the normalization $Y^n$
of $Y$ has a Zariski-dense open orbit $\overline{G}_1 y$, a few isolated orbits
and no others.
Since $\overline{G}_1$ is abelian, it is solvable and has a fixed point
(by Borel's fixed point theorem), so it does have an isolated orbit in $Y^n$.
By \cite[Theorem and its Remark, p. 182]{HO},
$Y^n$ is a projective cone over a rational
homogeneous projective manifold which again has a fixed point by the induced action
of $\overline{G}_1$.
Thus at least one of the lines generating the
cone $Y^n$ is stabilized by $G_1$; the image of this line in $Y$ is a $G$-periodic rational curve,
contradicting the condition (3) in Theorem \ref{ThC}.
This proves the claim.

\par \vskip 1pc
By Claim \ref{c1}, $q(Y') > 0$ and hence $Y'$ is not rationally connected
(cf. \cite[Remark 5.1]{NZ}).
Then  $Y'$ is uniruled, by the condition (4) of Theorem \ref{ThC} and since $\kappa(Y') = -\infty$.

Note that the class of the ample divisor $(K_X)_{|Y}$ is preserved by $g^*$ ($g \in G_1 \le G$).
We now apply results in \cite[Theorem 4.18, Corollary 4.20]{IntS} (cf. also
\cite[Lemma 4.1]{nz2}). There is a `special maximal rationally connected fibration'
$Y \dashrightarrow Z$
onto a normal projective variety $Z$ such that
its graph $W = \Gamma_{Y/Z}$ is equi-dimensional over $Z$
(i.e., every fibre $W_z$ over a point $z \in Z$
is of pure-dimension equal to $\dim W - \dim Z$)
and
${G_1}_{|Y}$ descends to
${G_1}_{| Z}$ with $g^*H_Z \sim H_Z$ ($g \in G_1$) for an ample divisor $H_Z$
(the intersection sheaf of $(K_X)_{|Y}$ over $Z$).
In particular, the natural maps $W \to Y$ and $W \to Z$
are all $G_1$-equivariant.

Let $Z' \to Z$ be a $G_1$-equivariant resolution.
Set $Z' = Z$ when $Z$ is smooth.
Then $q(Z') = q(Y') > 0$ (cf. \cite[Lemma 5.3]{NZ}).
Since $G_1$ acts trivially on the class of the nef and big pullback on $Z'$ of $H_Z$,
we may assume that ${G_1}_{|Z'} \le \Aut_0(Z')$ after replacing $G_1$ by another finite-index subgroup of $G$,
by the result of Lieberman and Fujiki
(cf. \cite[Lemma 2.23]{JDG}, \cite[Proposition 2.2]{Li}, \cite[Theorem 4.8]{Fu}).

Since $Y'$ is not rationally connected as mentioned above,
we have $1 \le \dim Z \le \dim Y - 1 = k - 1 \le n-2$.
By \cite[Corollary 1.4]{GHS}, $Z$ is not uniruled.
Thus $\kappa(Z') \ge 0$, by the condition (4) in Theorem \ref{ThC} and since $q(Z') > 0$.

As in Lemma \ref{ample3}, we may assume that $\kappa(Z') = 0$,
by the minimality of $\dim Y = k > 0$.
By the condition (5) of Theorem \ref{ThC}, $Z'$ has a good minimal model $Z_m$
i.e., $Z_m$ has only canonical singularities and $K_{Z_m} \sim_{\Q} 0$.
Take a partial resolution $\tau: {Z_m}' \to Z_m$ such that
$Z_m'$ has only terminal singularities and $K_{{Z_m}'} = \tau^*K_{Z_m}$ ($\sim_{\Q} 0$)
(cf. \cite[Corollary 1.4.3]{BCHM}). Replacing $Z_m$ by ${Z_m}'$ we may assume that
$Z_m$ has only terminal singularities.

We may assume that
${G_1}_{| Z'}$ and hence
$$M := \Aut_0(Z')$$ are non-trivial.
The (non-trivial)
birational action of the connected algebraic group $M$ on the terminal minimal variety $Z_m$ is biregular
(cf. \cite[Corollary 3.8]{Ha}).

By the argument for $Y^n$ in Lemma \ref{ample3}, either
$M$ stabilizes every member of a covering
torus-quotient family on $Z_m$ so that the inverse on $Y$ of a general member
is $G$-periodic which contradicts the minimality of $\dim Y = k > 0$, or
$Z_m$ is a $Q$-torus with $q(Z_m) = q(Z') > 0$ and an \'etale covering
$B \to Z_m$ from an abelian variety $B$ such that the action of $M$ on $Z_m$
lifts to an action of $\widetilde{M}$ on $B$ with $\widetilde{M}/\Gal(B/Z_m) \cong M$.

We only need to consider this latter case.

\begin{claim}\label{c2}
We have $Z = Z' = Z_m$.
\end{claim}

We prove this claim.
Since $Z_m$ is a $Q$-torus, it has no rational curves, and hence
the birational map $Z' \dashrightarrow Z_m$ is actually holomorphic
(and $M$-equivariant). If $Z' \to Z_m$ is not an isomorphism,
then its non-isomorphic points
on $Z_m$ and its inverse on $B$ form Zariski-closed subsets of codimension $\ge 1$
which are respectively stabilized
by the connected algebraic groups $M_{| Z_m}$
and $\widetilde{M}_{| B} \le \Aut_0(B)$ ($=$ translations).
By \cite[Lemma 2.11]{per}, there is an $\widetilde{M}$-equivariant
quotient map $B \to B/C$ (with $C$ a subtorus of dimension in $\{0, \dots, \dim B - 1\}$)
such that every element of $\widetilde{M}_{|(B/C)} \le \Aut_0(B/C)$
($=$ translations) has a periodic point. Thus $\widetilde{M}_{| (B/C)}$ is trivial.
So a general coset of $B/C$ (or a general curve on $B$, when $\dim C = 0$),
its image in $Z_m$, and the
inverse of this image on $Y$ are respectively
stabilized by $\widetilde{M}_{|B}$, $M_{|Z_m}$ and $G_1$, contradicting the minimality of $\dim Y = k > 0$.

So we may assume that $Z' = Z_m$. If $Z' \to Z$ is not an isomorphism, then
the inverse on $B$ of its non-isomorphic points on $Z_m$ form a Zariski-closed
subset of codimension $\ge 1$ which is
stabilized by $\widetilde{M}_{|B}$. This contradicts the minimality of
$\dim Y = k > 0$ by the same argument above. The claim is proved.

\par \vskip 1pc
By Claim \ref{c2}, we have $Z = Z' = Z_m$. If $W = \Gamma_{Y/Z} \to Y$
is not isomorphic then its non-isomorphic points on $W$ maps to a $G_1$-stabilized
Zariski-closed subset of $Z$ of codimension $\ge 1$, since $W \to Z$ is equi-dimensional
and $G_1$-equivariant. This leads to a contradiction as in the proof of Claim \ref{c2} above.

Hence we may assume that
$W = Y$. By the same reasoning, we can show that
$Y$ is smooth and $W = Y \to Z = Z_m$ is smooth.

Let $\overline{G}_1 \le \Aut_0(Y)$
be the identity connected component of the closure of ${G_1}_{|Y} \le \Aut_0(Y)$ in the Zariski topology,
which is abelian because so is $G_1$.
By the minimality of $\dim Y = k > 0$,
our $Y$ has a dense open orbit $\overline{G}_1 y$ intersecting every fibre of $Y \to Z$,
and its complement $\Sigma$ in $Y$ is a finite set.
In fact, $\Sigma = \emptyset$. Otherwise, the union of fibres of $Y \to Z$ passing through
the points in $\Sigma$ is $\overline{G}_1$-stabilized and hence $G$-periodic,
contradicting the minimality of $k = \dim Y$.
Thus, $Y$ is $\overline{G}_1$-homogeneous and $Y \cong \overline{G}_1 / F$ for a subgroup
$F \le \overline{G}_1$.

If $\overline{G}_1$ is a complex torus, then so is $Y$, a contradiction to the
condition (3) in Theorem \ref{ThC}.
Therefore, the linear part $L(\overline{G}_1)$ of $\overline{G}_1$ is positive-dimensional
and a rational variety by a result of Chevalley.
Then every $L(\overline{G}_1)$-orbit on $Y$ is a unirational variety
and hence contained in a fibre of $Y \to Z$ (since the $Q$-torus $Z$ contains no
rational curves). Thus the connected linear group $L(\overline{G}_1)$
acts trivially on the base $Z$ of the fibration $Y \to Z$.
Since $L(\overline{G}_1)$ is abelian (and hence solvable),
its fixed locus (point wise) on each fibre of $Y \to Z$ is non-empty and
a proper subset of the fibre. So the union of these fixed loci on fibres
is $\overline{G}_1$-stabilized (and a proper subset of $Y$) because
$L(\overline{G}_1) \lhd \overline{G}_1$.
Thus $Y$ is not $\overline{G}_1$-homogeneous, a contradiction.
Lemma \ref{ample4} is proved.
\end{proof}

\begin{lemma}\label{K0}
Under the assumption of Theorem $\ref{ThA}(2)$ or Theorem $\ref{ThC}$,
the canonical divisor satisfies: $K_X \equiv 0$ $($numerically$)$.
\end{lemma}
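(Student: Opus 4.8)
The plan is to show directly that $K_X\cdot H^{n-1}=0$ and $K_X^2\cdot H^{n-2}=0$, and then to deduce the numerical triviality of $K_X$ from the Hodge--Riemann-type Lemma \ref{D=0}(2). The latter applies because $H$ is ample by Lemma \ref{ample3} (under the hypothesis of Theorem \ref{ThA}(2)) or Lemma \ref{ample4} (under that of Theorem \ref{ThC}), and because $K_X$ is $\Q$-Cartier, hence $\R$-Cartier, $X$ being $klt$ in all cases.

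First I would exploit the eigenvector structure. Since every $g\in G$ is an automorphism, $g^*K_X\sim K_X$ and the intersection form is $g$-invariant. Expanding $H^{n-1}=\bigl(\sum_{i=1}^n L_i\bigr)^{n-1}$ into monomials $L_{i_1}\cdots L_{i_{n-1}}$ and applying $g^*$ to the scalar $K_X\cdot L_{i_1}\cdots L_{i_{n-1}}$, the relations $g^*L_i=\chi_i(g)L_i$ give
$$K_X\cdot L_{i_1}\cdots L_{i_{n-1}}=\Bigl(\prod_{j=1}^{n-1}\chi_{i_j}(g)\Bigr)\,K_X\cdot L_{i_1}\cdots L_{i_{n-1}},$$
so this number vanishes as soon as the character $\prod_j\chi_{i_j}$ is non-trivial on $G$. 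To see it always is, write $\prod_j\chi_{i_j}=\prod_{i=1}^n\chi_i^{a_i}$ with $a_i\ge 0$ and $\sum_i a_i=n-1$; using $\chi_n=(\chi_1\cdots\chi_{n-1})^{-1}$, triviality would read $\sum_{i=1}^{n-1}(a_i-a_n)\log\chi_i\equiv 0$ on $G$, and since $\varphi$ maps $G$ isomorphically onto a lattice spanning $\R^{n-1}$ the functions $\log\chi_1,\dots,\log\chi_{n-1}$ are $\R$-linearly independent, forcing $a_1=\dots=a_n$ and hence $n\mid(n-1)$, absurd for $n\ge 3$. Summing over monomials yields $K_X\cdot H^{n-1}=0$. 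The same computation applied to $K_X^2\cdot L_{i_1}\cdots L_{i_{n-2}}$ (now $\sum_i a_i=n-2$, so triviality would force $n\mid(n-2)$, again absurd for $n\ge 3$) gives $K_X^2\cdot H^{n-2}=0$. Note that only nef-bigness of $H$ is used here; ampleness enters solely in the next step.

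Finally I would invoke Lemma \ref{D=0}(2) with $H_1=\dots=H_{n-2}=H$ ample and $D=K_X$: since $H^{n-1}\cdot K_X=0$ and $H^{n-2}\cdot K_X^2=0$, the equality case of that lemma yields $K_X\equiv 0$. The points requiring care are the linear-independence-plus-counting argument ruling out $\prod_j\chi_{i_j}\equiv 1$ for every monomial (this is exactly where $n\ge 3$ is essential, consistent with Remark \ref{rThA}(1)) and verifying that the hypotheses of Lemma \ref{D=0} hold — ampleness of $H$ from Lemmas \ref{ample3}/\ref{ample4} and $\R$-Cartierness of $K_X$ from $klt$-ness; the remainder is bookkeeping.
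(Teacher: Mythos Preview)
Your proof is correct and follows the same strategy as the paper: use ampleness of $H$ from Lemmas \ref{ample3}/\ref{ample4}, establish $H^{n-1}\cdot K_X = 0 = H^{n-2}\cdot K_X^2$ via the common-eigenvector structure of the $L_i$ and the $G$-invariance of $K_X$, then conclude by Lemma \ref{D=0}(2). The only stylistic difference is that the paper simply refers back to the argument in the proof of Lemma \ref{ample} (choosing $g\in G$ with all $\chi_{i_j}(g)>1$, possible since the indices involved number at most $n-1$), whereas you verify directly that the character $\prod_j\chi_{i_j}$ is non-trivial via the linear independence of $\log\chi_1,\dots,\log\chi_{n-1}$; these are equivalent ways of obtaining the same monomial vanishings.
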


\begin{proof}
By Lemmas \ref{ample3} and \ref{ample4}, $H$ is ample.
Since $g^*K_X \sim K_X$ for $g \in G$, by the proof of Lemma \ref{ample},
we have $H^{n-1} \cdot K_X = 0 = H^{n-2} \cdot K_X^2$.
Then $K_X \equiv 0$ by Lemma \ref{D=0}.
\end{proof}

\begin{setup}\label{pfThA}
{\bf Proof of Theorems \ref{ThA}(2) and \ref{ThC}}
\end{setup}

By Theorem \ref{DSThm}, $r = n-1$.
By Lemma \ref{K0}, we have $K_X \equiv 0$.
Let $m > 0$ be minimal such that $mK_X \sim 0$ (a result of Kawamata).
Replacing $X$ by its global index-one cover
$$\Spec \oplus_{i=0}^{m-1} \OO_X(-iK_X)$$
which has a natural $G$-action and whose canonical divisor is linearly equivalent to zero,
we may assume that $K_X$ ($\sim 0$) is Cartier
and hence $X$ has at worst Gorenstein canonical singularities.
Let $\sigma: X' \to X$ be a $G$-equivariant resolution crepant in codimension two.
Denote by $c_i(X')$ the $i$-th Chern class of $X'$.
As in \cite[p.~265]{SW} (cf.~also \cite[Definition 2.4]{CY3}), define the second Chern class
of $X$ as
$c_2(X) := \sigma_*c_2(X')$ and regard it as a multi-linear form on
$\NS_{\C}(X) \times \cdots \times \NS_{\C}(X)$:
$$c_2(X) \cdot D_1 \cdots D_{n-2}
:= \sigma_*c_2(X') \cdot D_1 \cdots D_{n-2} = c_2(X') \cdot \sigma^*D_1 \cdots \sigma^* D_{n-2} .$$
Since $g^*c_2(X) = c_2(X)$ for all $g \in G$,
we have the vanishing $c_2(X) \cdot H^{n-2} = 0$
({\it using $n \ge 3$ here}) as in the proof of Lemma \ref{ample}.
Since $H$ is ample by Lemmas \ref{ample3} and \ref{ample4}, this vanishing and Miyaoka's pseudo-effectivity of $c_2$
for minimal variety (cf.~\cite[Theorem 4.1, Proposition 1.1]{SW})
imply that $c_2(X) = 0$ as a multi-linear form, as remarked in \cite[Definition 2.4]{CY3}.
Now the vanishing of $c_i(X)$ ($i = 1, 2$) imply that
there is a finite surjective morphism $A' \to X$ from an abelian variety $A'$
(cf.~\cite[Corollary, p.~266]{SW}
and \cite[Theorem 7.6]{CZ}). Indeed, when $X$ has only quotient singularities, the vanishing of
$c_2(X)$ implies the vanishing of the orbifold second Chern class of $X$
(cf.~\cite[Proposition 1.1]{SW}). Since $K_{A'} \sim 0 \equiv K_X$,
the map $A' \to X$ is \'etale in codimenion one.
Let $A \to X$ be the Galois cover corresponding to the unique maximal lattice $L$
in $\pi_1(X \setminus \Sing X)$ so that $A$ is an abelian variety.
Then $X = A/F$ with $F = \pi_1(X \setminus \Sing X)/L = \Gal(A/X)$, and there is an
exact sequence
$$1 \to F \to \widetilde{G} \to G \to 1$$
where $\widetilde{G}$ (the lifting of $G$) is a group acting faithfully on $A$
(cf.~the proof in \cite[\S 3, especially Proposition 3]{Be} applied to
\'etale-in-codimension-one covers, also \cite[Proposition 3.5]{nz2}).
This proves Theorems \ref{ThA}(2) and \ref{ThC}; indeed, the last part
is the application of Lemma \ref{G/H} to the groups $F \lhd \widetilde{G}$;
see the proof of Theorem \ref{ThA}(1) below
for the freeness of the action $F$
outside a finite set.

\begin{setup}
{\bf Proof of Theorem \ref{ThA}(1)}
\end{setup}

Suppose that a positive-dimensional proper subvariety $Y' \subset X$ is $G$-periodic.
Then a proper subvariety $Y \subset A$ (dominating $Y'$) is $G$-periodic
and hence stabilized by a finite-index subgroup $G_1 \le G$.
By the proof of \cite[Lemma 2.11]{per}, there is a $G_1$-equivariant
homomorphism $A \to A/B$ with $\dim (A/B) \in \{1, \dots, n-1\}$.
This contradicts \cite[Lemma 2.10]{Tits}, because $r(G_{1|A}) = r(G_{1|X}) = \dim A - 1$ now
(cf. \cite[Appendix, Lemma A.8]{NZ}). Hence the last assertion is true.

Let $Y \subset A$ be the subset where $\tau : A \to X$ is not \'etale.
Then $Y$ is $G$-stabilized. Hence
$\dim Y = 0$ by the argument above. Thus
$K_A = \tau^*K_X$ by the ramification divisor formula.
Hence $0 \sim \tau_*K_A = \tau_* \tau^* K_X = d K_X$ with
$d = \deg(\tau)$.
Replacing $A$ by the Galois closure of $\tau$, we may assume
that $\tau : A \to X$ is Galois and hence $X$ has only quotient singularities.
This proves Theorem \ref{ThA}(1).

\begin{setup}
{\bf Proof of Corollary \ref{CorB}}
\end{setup}

By Theorem \ref{DSThm}, $r = n-1$. The `if' part follows from Theorem \ref{ThA}(2).
For the `only if' part, the proof of Theorem \ref{ThC} in \ref{pfThA} (using Lemma \ref{G/H}) implies the
lifting of a finite-index subgroup $G_1 \le G$ to some complex torus cover of $X$.
Hence $X$ has no $G_1$-periodic (i.e., $G$-periodic) subvariety $Y \subset X$ of
positive-dimension by Theorem \ref{ThA}(1). This proves the corollary.

\begin{setup}
{\bf Proof of Corollary \ref{CorD}}
\end{setup}

By Theorem \ref{DSThm}, $r = n-1$.
We have $H^{n-1} . K_X = 0$,
by the proof of Lemma \ref{ample} and since $g^*K_X \sim K_X$ for
$g \in G$. Thus $K_X \equiv 0$, by \cite[Lemma 2.2]{nz2} and since $K_X$ is nef
by the minimality of $X$.
Hence $H$ is ample by the proof of Lemma \ref{ample2}.
The rest follows from the proof of Theorem \ref{ThC} in \ref{pfThA}.

\begin{setup}
{\bf Addendum to \cite{CY3}}
\end{setup}

\cite{CY3} itself is not used in the present paper.
Thanks to the careful reading of the referee of the present paper,
in the statements of \cite[Theorem 1.1(2)]{CY3} and \cite[Corollary 1.2]{CY3} (resp. \cite[Theorem 1.5]{CY3}),
the phrase

\par \vskip 1pc \noindent
``$G$-equivariant finite (resp. finite \'etale) Galois cover $\tau: A \to X'$ (resp. $\tau: A \to X$)"
\par \vskip 1pc \noindent
should be read as:
\par \vskip 1pc \noindent
``equivariant finite (resp. finite \'etale) Galois cover $\tau: (A, \widetilde{G}) \to (X', G)$
(resp. $\tau: (A, \widetilde{G}) \to (X, G)$)
with $G \cong \widetilde{G}/\Gal(A/X')$ (resp. $G \cong \widetilde{G}/\Gal(A/X)$)" .
\par \vskip 1pc \noindent
Such lifting from $G$ on $X'$ or $X$ to $\widetilde{G}$ on $A$
is due to \cite[\S 3]{Be} applied to
\'etale-in-codimension-one covers (cf. also \cite[Proposition 3.5]{nz2}).
The proofs of \cite[Theorem 1.5 and Corollaries 1.2 and 1.6]{CY3} are not affected, while in the proof of
\cite[Theorem 1.1]{CY3}, one just replaces all $G_{|A}$ by $\widetilde{G}$, then the arguments go through.
One also notes that $N(\widetilde{G})$ is the pullback of $N(G)$ via the quotient map
$\gamma : \widetilde{G} \to G$ (cf. \cite[Remark 2.1(11)]{JDG} or \cite[Appendix A, Lemma A.8]{NZ})
and hence $\gamma$ induces an isomorphism $\widetilde{G}/N(\widetilde{G}) \cong G/N(G)$.


\begin{thebibliography}{99}

\bibitem{Be}
A.~Beauville,
Some remarks on K\"ahler manifolds with $c\sb{1}=0$,
\emph{Classification of Algebraic and Analytic Manifolds}
(Katata, 1982, ed. K.~Ueno),
Progr.\ Math., \textbf{39} Birkh\"auser 1983, pp.~1--26.

\bibitem{BCHM}
C.~Birkar, P.~Cascini, C.~D.~Hacon and J.~McKernan,
Existence of minimal models for varieties of log general type,
J. \ Amer. \ Math. \ Soc. \ \textbf{23} (2010) 405 -- 468.

\bibitem{Ca}
S.~Cantat,
Dynamique des automorphismes des surfaces projectives complexes.
C. \ R. \ Acad. \ Sci. \ Paris Sr. \ I Math. \ \textbf{328} (1999), no. 10, 901--906.

\bibitem{Ca06}
S.~Cantat,
Quelques aspects des syst\`emes dynamiques polynomiaux: 
existence, exemples et rigidit\'e (version pr\'eliminaire); Panorama et Synth\`ese, vol. 30, \`a paraitre.

\bibitem{CZ}
S.~Cantat and A.~Zeghib,
Holomorphic actions of higher rank lattices in dimension three, preprint 2009.

\bibitem{DS}
T.-C.~Dinh and N.~Sibony,
Groupes commutatifs d'automorphismes d'une vari\'et\'e k\"ahlerienne compacte,
Duke Math.\ J. \textbf{123} (2004) 311--328.

\bibitem{Fu}
A.~Fujiki,
On automorphism groups of compact K\"ahler manifolds,
Invent. Math. \textbf{44} (1978) 225--258.

\bibitem{Fujino}
O.~Fujino,
Fundamental theorems for the log minimal model program, arXiv:\textbf{0909.4445}.

\bibitem{GHS}
T.~Graber, J.~Harris and J.~Starr,
Families of rationally connected varieties,
J.\ Amer.\ Math.\ Soc.\ \textbf{16} (2003), 57--67.

\bibitem{Gr}
M.~Gromov, On the entropy of holomorphic maps,
Enseign. \ Math. \ (2) \textbf{49} (2003), no. 3-4, 217�235.

\bibitem{HM}
C.~D.~Hacon and J.~ McKernan,
On Shokurov's rational connectedness conjecture, Duke Math. \ J. \textbf{138} (2007), no. 1, 119--136.

\bibitem{Ha} M.~Hanamura,
On the birational automorphism groups of algebraic varieties,
Compos.\ Math.\ \textbf{63} (1987), 123--142.

\bibitem{Hart}
R.~Hartshorne, Algebraic Geometry, Graduate Texts in Mathematics \textbf{52}, Springer-Verlag, 1977.

\bibitem{HO}
A.~T.~Huckleberry and E.~Oeljeklaus,
A characterization of complex homogeneous cones,
Math.\ Z. \textbf{170} (1980) 181--194.

\bibitem{KMM}
Y.~Kawamata, K.~Matsuda and K.~Matsuki,
Introduction to the minimal model problem,
\emph{Algebraic geometry, Sendai, 1985} (T.~Oda ed.),
Adv.\ Stud.\ Pure Math., \textbf{10}, Kinokuniya and North-Holland,
1987, pp.~283--360.

\bibitem{KM} J.~Koll\'ar and S.~Mori,
Birational geometry of algebraic varieties,
Cambridge Tracts in Math. \textbf{134},
Cambridge Univ.\ Press, 1998.

\bibitem{Li}
D.~I.~Lieberman, Compactness of the Chow scheme: applications to
automorphisms and deformations of K\"ahler manifolds, Lecture Notes
in Mathematics, \textbf{670}, pp. 140--186, Springer, 1978.

\bibitem{IntS}
N.~Nakayama, Intersection sheaves over normal schemes,
J. Math. Soc. Japan \textbf{62} (2010) 487 -- 595.

\bibitem{NZ}
N.~Nakayama and D. -Q.~Zhang,
Building blocks of \'etale endomorphisms of complex projective manifolds,
Proc. \ London Math. \ Soc. \ \textbf{99} (2009) 725 -- 756.

\bibitem{nz2}
N.~Nakayama and D. -Q.~Zhang,
Polarized Endomorphisms of Complex Normal Varieties,
Math. \ Ann. \ \textbf{346} (2010) 991 -- 1018.

\bibitem{SW}
N.~I.~Shepherd-Barron and P.~M.~H.~Wilson,
Singular threefolds with numerically trivial first
and second Chern classes,
J.\ Alg.\ Geom.\ \textbf{3} (1994) 265--281.

\bibitem{Si}
J.~H.~Silverman, 
Rational points on $K3$ surfaces: a new canonical height, Invent. \ Math. \ \textbf{105} (1991), no. 2, 347 -- 373.

\bibitem{Ue} K.~Ueno,
\emph{Classification theory of algebraic varieties and compact complex spaces},
Lecture Notes in Mathematics, \textbf{439},
Springer, 1975.

\bibitem{We}
J.~Wehler, $K3$-surfaces with Picard number $2$, Arch. \ Math. (Basel) \textbf{50} (1988), no. 1, 73 -- 82. 

\bibitem{Tits}
D. -Q.~Zhang,
A theorem of Tits type for compact K\"ahler manifolds,
Invent. \ Math. \ \textbf{176} (2009) 449 -- 459.

\bibitem{JDG}
D. -Q.~Zhang, Dynamics of automorphisms on projective complex manifolds,
J. \ Diff. \ Geom. \ \textbf{82} (2009) 691 -- 722.

\bibitem{per}
D. -Q.~Zhang,
The $g$-periodic subvarieties for an automorphism $g$ of positive entropy on a projective variety,
Adv. \ Math. \ \textbf{223} (2010) 405 -- 415.

\bibitem{CY3}
D. -Q.~Zhang,
Automorphism groups of positive entropy on minimal projective varieties,
Adv. \ Math. \ \textbf{225} (2010) 2332 -- 2340.

\end{thebibliography}
\end{document}